\newtheorem{thm}{Theorem}[section]
\newtheorem{lemma}[thm]{Lemma}
\newtheorem{prop}[thm]{Proposition}
\newtheorem{clm}[thm]{Claim}
\newtheorem{claim}[thm]{Claim}
\newtheorem*{theorem*}{Theorem}
\newtheorem*{prop*}{Proposition}
\newcommand\ex{\ensuremath{\mathrm{ex}}}
\newcommand\tr{\ensuremath{\mathrm{Tr}}}
\newcommand\cF{{\mathcal F}}
\newcommand\cH{{\mathcal H}}
\newcommand\cN{{\mathcal N}}
\newcommand{\vep}{\varepsilon}
\newcommand{\ignore}[1]{}
\title{On forbidding graphs as traces of hypergraphs}
\author[1]{D\'aniel Gerbner\footnote{Alfr\'ed R\'enyi Institute of Mathematics, HUN-REN, E-mail: \texttt{gerbner@renyi.hu.}} \and Michael E. Picollelli\footnote{Department of Mathematics, California State University San Marcos, San Marcos, CA 92096. E-mail: \texttt{mpicollelli@csusm.edu}}}
\date{}
\begin{document}

\maketitle

\begin{abstract}
    We say that a hypergraph $\mathcal{H}$ contains a graph $H$ as a trace if there exists some set $S\subset V(\mathcal{H})$ such that $\mathcal{H}|_S=\{h\cap S: h\in E(\mathcal{H})\}$ contains a subhypergraph isomorphic to $H$. We study the largest number of hyperedges in 3-uniform hypergraphs avoiding some graph $F$ as trace. In particular,
    we improve a bound given by Luo and Spiro in the case $F=C_4$, and obtain exact bounds for large $n$ when $F$ is a book graph.
\end{abstract}

\section{Introduction}

A fundamental theorem in extremal Combinatorics is due to Tur\'an \cite{T} and determines the largest number of edges in $n$-vertex $K_k$-free graphs (the case $k=3$ was proved earlier by Mantel \cite{man}). More generally, given a family $\cF$ of graphs, $\ex(n,\cF)$ denotes the largest number of edges in $n$-vertex graphs that do not contain any member of $\cF$ as a (not necessarily induced) subgraph, and if there is one forbidden subgraph, we use the simpler notation $\ex(n,F)$ instead of $\ex(n,\{F\})$.
The Erd\H os-Stone-Simonovits theorem \cite{ES1966,ES1946} determines the asymptotics of $\ex(n,\cF)$ in the case $\cF$ does not contain any bipartite graphs. The bipartite case is much less understood and is the subject of extensive research, see \cite{fusi} for a survey.

There is a natural analogue of this problem for hypergraphs and was already asked by Tur\'an. Given a family $\cF$ of hypergraphs, we denote be $\ex_r(n,\cF)$ the largest number of edges in an $r$-uniform hypergraph that does not contain any member of $\cF$. This problem is much more complicated, for example we still do not know the asymptotics in the next obvious question, when the complete 4-vertex 3-uniform hypergraph is forbidden. A relatively recent line of research is to consider \textit{graph-based hypergraphs}. This is an informal common name of hypergraph classes that are obtained from graphs by enlarging their edges according to some set of rules. Extremal results concerning such hypergraphs were collected in Section 5.2. in \cite{gp}.

The most studied graph-based hypergraphs are the following. The \textit{expansion} $F^{(r)+}$ of $F$ is obtained by adding $r-2$ new vertices to each edge such that each new vertex is added to only one edge, see \cite{mubver} for a survey on expansions. A \textit{Berge copy} of $F$ is obtained by adding  $r-2$ new vertices to each edge arbitrarily. The new vertices may be already in $F$ or not, and they may be added to any number of hyperedges. More precisely, we say that a hypergraph $\cF$ is a Berge copy of $F$ if there is a bijection $f$ between the edges of $F$ and the hyperedges of $\cF$ such that for each edge $e$ we have $e\subset f(e)$. Observe that there can be several non-isomorphic $r$-uniform Berge copies of $F$, the expansion being one of them. We denote by Berge-$F$ the family of Berge copies of $F$.
Each Berge copy of $F$ is defined by a graph copy of $F$ on a subset of the vertices, we call that the \textit{core} of the Berge-$F$. Berge hypergraphs were defined (generalizing the notion of hypergraph cycles due to Berge \cite{Be87}) by Gerbner and Palmer \cite{gp1}.

Here we study a third type of graph based hypergraphs. We denote by $\tr(F)$ the family of Berge copies of $F$ where the vertices added to the edges of $F$ are each outside $V(F)$. In other words, the \textit{trace} of these Berge copies is $F$ on $V(F)$, i.e., $f(e)\cap V(F)=e$ for each edge $e$ of $F$. These were called \textit{induced Berge} in \cite{fl}.

The maximum number of hyperedges in hypergraphs with some forbidden traces have long been studied. For example, the celebrated Sauer Lemma \cite{sau,she,vc} deals with the case $\cH$ does not contain the power set of a $t$-element set as a trace.

The Tur\'an problem for these graph-based hypergraphs is closely related to the so-called \textit{generalized Tur\'an problems}. Given two graphs $H$ and $G$, we denote by $\cN(H,G)$ the number of copies of $H$ contained in $G$. Given an integer $n$ and graphs $H$ and $F$, we let $\ex(n,H,F)=\max\{\cN(H,G): \text{ $G$ is an $n$-vertex $F$-free graph}\}$. After several sporadic results, the systematic study of this function was initiated by Alon and Shikhelman \cite{ALS2016}.

It is easy to see that if we take the vertex sets of $r$-cliques in an $F$-free graph as hyperedges, the resulting graph is Berge-$F$-free. Therefore, we have $\ex(n,K_r,F)\le \ex_r(n,\text{Berge-}F)\le\ex_r(n,\tr(F))\le\ex_r(n,F^{(r)+})$, where the second and third inequality follows from $F^{(r)+}\in \tr(F)\subset \text{Berge-}F$. Stronger connection was established for these cases: $\ex_r(n,\text{Berge-}F)\le \ex(n,K_r,F)+\ex(n,F)$ \cite{gp2}, $\ex_r(n,\tr(F))=\Theta(\max_{s\le k}\ex(n,K_s,F))$ \cite{fl} and $\ex_r(n,F^{(r)+})=\ex(n,K_r,F)+O(n^{r-1})$ \cite{gerbner}.

The first to study forbidden graphs as traces were Mubayi and Zhao \cite{muzh}. They studied the case $F=K_k$ (in fact they considered the more general case when complete hypergraphs are forbidden as traces). They observed that in the case $r<k$ we have $\ex_r(n,\tr(K_k))=\ex_r(n,F^{(r)+})$ for sufficiently large $n$, and the exact value of that was determined by Pikhurko \cite{pikhu}. In the case $r\ge k$, Mubayi and Zhao conjectured that the extremal construction for sufficiently large $n$ is the following. We take a $K_k$-free graph on $n-r+k-1$ vertices with $\ex(n,K_{k-1},K_k)$ copies of $K_{k-1}$. Note that this is the so-called \textit{Tur\'an graph} by a theorem of Zykov \cite{zykov}. Then we take a set $U$ of $r-k+1$ new vertices, and pick as hyperedges the union of $U$ with the vertex set of any $(r-1)$-clique of the Tur\'an graph.
Mubayi and Zhao proved this conjecture asymptotically if $k=3$, i.e., they showed that $\ex_r(n,\tr(K_3))=n^2/4+o(n^2)$. They proved this exactly if $r=3$. An earlier version of this paper contained a proof of the $k=3$ case of their conjecture for any $r$, i.e., the proof of $ex_r(n,\tr(K_3))=\lfloor(n-r+2)^2/4\rfloor$, if $n$ is sufficiently large. However, this statement follows from an old theorem of Frankl and Pach \cite{frapac} for every $n$, which also implies the extremal examples are constructed by adding a common set of $r-2$ vertices to every edge of maximum complete bipartite graph on $n-r+2$ vertices.

We extend this result to book graphs in the 3-uniform case. The \textit{book graph} $B_t$ consists of $t$ triangles sharing an edge, i.e., $B_t=K_{1,1,t}$.

\begin{thm}\label{book} For every $t$, if $n$ is sufficiently large, then we have
    $\ex_3(n,\tr(B_t))=\lfloor (n-1)^2/4\rfloor$, with equality only for the $3$-uniform hypergraph formed by adding a common vertex to every edge of a maximum complete bipartite graph on $n-1$ vertices.
\end{thm}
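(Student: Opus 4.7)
The lower bound is immediate. The hypergraph $\cH^{\ast}$ with edges $\{v_0,a,b\}$ for each $ab\in E(K_{\lfloor(n-1)/2\rfloor,\lceil(n-1)/2\rceil})$ on $V\setminus\{v_0\}$ has $\lfloor(n-1)^2/4\rfloor$ hyperedges and is $\tr(B_t)$-free: a putative trace on $S=\{u,v,w_1,\dots,w_t\}$ must either satisfy $v_0\notin S$ (forcing $\cH^{\ast}|_S$ to contain $B_t$ as a subgraph of a bipartite graph, impossible since $B_t$ has triangles) or $v_0\in S$ (forcing some $B_t$-edge on $S$ not incident to $v_0$ to be witnessed by a hyperedge avoiding $v_0$, contradicting that every hyperedge contains $v_0$).

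For the upper bound, let $\cH$ be $\tr(B_t)$-free with $|E(\cH)|\geq\lfloor(n-1)^2/4\rfloor$. Two easy observations drive the argument: (a) every link $L_v:=\{xy:\{v,x,y\}\in E(\cH)\}$ is $B_t$-free, as a $B_t$-copy in $L_v$ on some $(t+2)$-set $S$ yields, via the hyperedges $\{v\}\cup e$ for $e\in E(B_t)$ on $S$, a trace of $B_t$ in $\cH$; (b) the codegree graph $G_{t+1}:=\{xy:|C(xy)|\geq t+1\}$ with $C(xy)=\{z:\{x,y,z\}\in E(\cH)\}$ is $B_t$-free, since for each edge $e$ of a $B_t$-copy in $G_{t+1}$ on a $(t+2)$-set $S$ we have $|C(e)|\geq t+1>|S\setminus e|$ and so a third vertex outside $S$ always exists. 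Invoking $\ex(n,B_t)=\lfloor n^2/4\rfloor$ for $n$ sufficiently large (a consequence of Erd\H os's theorem that any graph with more than $\lfloor n^2/4\rfloor$ edges has an edge lying in at least $n/6$ triangles, so that for $n\gg t$ such a graph already contains $B_t$), one gets $d(v)\leq\lfloor(n-1)^2/4\rfloor$ and $|G_{t+1}|\leq\lfloor n^2/4\rfloor$.

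Next, a stability-based concentration step produces a vertex $v_0$ with $d(v_0)\geq(1-o(1))(n-1)^2/4$ and a bipartition $V\setminus\{v_0\}=A\sqcup B$ with $|A|,|B|=(1\pm o(1))(n-1)/2$ such that $L_{v_0}$ differs from $K_{A,B}$ in $o(n^2)$ edges. Given this template, \emph{every} hyperedge of $\cH$ must contain $v_0$: if $e=\{a,b,c\}$ is a hyperedge with $v_0\notin e$, pigeonhole places two of $a,b,c$ in the same part (say $a,b\in A$) with third vertex $z$; selecting $t$ vertices $p_1,\dots,p_t\in B\setminus\{z\}$ with $ap_i,bp_i\in L_{v_0}$, which is possible as $|B|\gg t$ and $L_{v_0}$ is near-complete bipartite, the $(t+2)$-set $S=\{a,b,p_1,\dots,p_t\}$ supports a $B_t$-trace in which $e$ witnesses the spine edge $ab$ (since $z\notin S$) and $\{v_0,a,p_i\},\{v_0,b,p_i\}$ witness the page edges $ap_i,bp_i$, contradicting $\tr(B_t)$-freeness. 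Hence $\cH=\{\{v_0\}\cup xy:xy\in L_{v_0}\}$ with $L_{v_0}$ a $B_t$-free graph on $n-1$ vertices, so $|E(\cH)|=|L_{v_0}|\leq\lfloor(n-1)^2/4\rfloor$ with equality forcing $L_{v_0}=K_{\lfloor(n-1)/2\rfloor,\lceil(n-1)/2\rceil}$ by the uniqueness of Mantel's extremal graph, giving $\cH=\cH^{\ast}$.

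\textbf{The main obstacle} is the concentration step identifying the dominant vertex $v_0$. The trivial averaging $\sum_v d(v)=3|E(\cH)|$ paired with $d(v)\leq\lfloor(n-1)^2/4\rfloor$ is a factor of $n$ too weak; one must genuinely combine the structural information from the $B_t$-free codegree graph $G_{t+1}$ (which constrains heavy pairs to a near-bipartite configuration) with the $B_t$-free links (which constrain local structure at each vertex), likely via an Erd\H os--Simonovits stability argument applied to both simultaneously, or alternatively via an induction on $n$ underpinned by a minimum-degree lemma showing that some vertex of $\cH$ has degree at most $\lfloor(n-1)/2\rfloor$, after which the cleanup reduces to the book-embedding argument using the large bipartite structure of $L_{v_0}$.
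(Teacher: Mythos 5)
Your two structural observations are correct and potentially useful: that every link $L_v$ is $B_t$-free, and that the ``heavy'' codegree graph $G_{t+1}$ is $B_t$-free. The lower bound and the final cleanup (once a dominant $v_0$ and a near-bipartite link have been produced, forcing every remaining hyperedge to contain $v_0$) are also essentially sound, modulo handling vertices of $A\cup B$ whose link-degree into the opposite part is $o(n)$, which you gloss over when you ``select $t$ vertices $p_i\in B$'' -- this requires that $a$ and $b$ each have many $L_{v_0}$-neighbours in $B$, which the $o(n^2)$-difference from $K_{A,B}$ does not guarantee for every pair.

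The genuine gap is exactly the one you flag yourself: the ``stability-based concentration step.'' As you note, averaging $\sum_v d(v)=3|E(\cH)|$ against $d(v)\le\lfloor(n-1)^2/4\rfloor$ is off by a factor of $n$, and you give no actual mechanism that uses (a) and (b) together to extract a single dominant vertex $v_0$. This is not a minor detail -- it is where nearly all of the work in the actual proof lies. The paper does not use links or the codegree graph $G_{t+1}$ at all; it instead decomposes $\cH$ into $\cH_1$ (hyperedges with a light, i.e.\ codegree-one, pair) and $\cH_2$ (the rest), shows $|E(\cH_2)|=o(n^2)$ via a triangle-removal argument combined with the Luo--Spiro codegree bound (at most $3t-3$ hyperedges of $\cH_2$ per shadow edge), and shows that the graph $G_1$ of chosen light edges is $B_{3t-1}$-free. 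Erd\H{o}s--Simonovits stability is then applied to $G_1$, after which the dominant-vertex conclusion is extracted through a five-part vertex partition $A',A'',B',B'',X$ with carefully tuned constants $\zeta\ll\vep$, showing $|X|\le 2$ and then ruling out $|X|=2$ by a degree argument. You would need to supply an argument of comparable delicacy before your outline becomes a proof; as written, the key step is asserted, not proved.
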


We remark that for any non-star $F$ and any $r$ and $n$ we have $\ex_r(n,\tr(F))\ge \ex(n-r+2,F)$, since if we add the same $r-2$ new vertices to each edge of an $F$-free graph, we obtain a $\tr(F)$-free hypergraph.


Other specific graphs $F$ such that $\ex_r(n,\tr(F))$ has been studied include stars \cite{fl,qg} and $K_{2,t}$ \cite{lusp,qg}.
In particular, Luo and Spiro \cite{lusp} showed $ n^{3/2}/2+o(n^{3/2})\le \ex_3(n,\tr(C_4))\le 5 n^{3/2}/6+o(n^{3/2})$.
We improve the constant factor in the main term of the upper bound.


\begin{thm}\label{4cyc}
    $\ex_3(n,\tr(C_4))\le (1+\sqrt{2}) n^{3/2}/4+o(n^{3/2})$.
\end{thm}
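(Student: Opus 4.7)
The plan is to bound $e(\cH)$ by exploiting two complementary families of $C_4$-free structures inside any $\tr(C_4)$-free 3-uniform hypergraph. For a pair $\{u,v\}\subseteq V(\cH)$ let $d(u,v)=|\{w:\{u,v,w\}\in E(\cH)\}|$ denote the codegree, let $G_i$ be the graph on $V(\cH)$ with edge set $\{P:d(P)\ge i\}$, and let $L_v$ denote the link of $v$. Note the identities
\[
3e(\cH)=\sum_{i\ge 1}e(G_i)=\sum_v e(L_v)=\sum_{\{u,v\}}d(u,v).
\]

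First I would observe two $C_4$-freeness statements. (a)~$G_3$ is $C_4$-free: for any $C_4$ on $\{v_1,\ldots,v_4\}$ in $G_3$, each $N(v_i,v_{i+1})$ has size $\ge 3>2$, so we may pick $w_i\in N(v_i,v_{i+1})\setminus\{v_1,\ldots,v_4\}$ to produce a trace-$C_4$. (b)~For every $S\subseteq V(\cH)$, the graph $\bigcup_{v\in S}L_v$ restricted to $V\setminus S$ is $C_4$-free: a putative $C_4$ on $\{v_1,\ldots,v_4\}\subseteq V\setminus S$ has each edge $e_i\in L_{w_i}$ for some $w_i\in S$, and since $w_i\notin\{v_1,\ldots,v_4\}$, the hyperedges $\{w_i\}\cup e_i$ form a trace-$C_4$. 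By K\H ov\'ari--S\'os--Tur\'an, (a) gives $e(G_3)\le \tfrac{1}{2}(1+o(1))n^{3/2}$, and (b) specialized to a set $S$ of size $k$ gives $|(\bigcup_{v\in S}L_v)\rest (V\setminus S)|\le \tfrac{1}{2}(1+o(1))(n-k)^{3/2}$.

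The second ingredient is the random-sampling consequence of (b): averaging over a uniformly random $S$ of size $pn$ yields, letting $m_d$ be the number of pairs of codegree exactly $d$ and writing $q=1-p$,
\[
\sum_{d\ge 1}m_d(1-q^d)\le \tfrac{1}{2}(1+o(1))\,q^{-1/2}\,n^{3/2}
\]
for every $q\in(0,1)$. The final step combines this family of inequalities---specifically at the optimal choice $q=1/\sqrt 2$, where $q^{-1/2}=2^{1/4}$---with the bound $e(G_3)\le \tfrac{1}{2}(1+o(1))n^{3/2}$ and the identity $3e(\cH)=\sum_d d\,m_d$ in a short linear-programming argument that delivers $e(\cH)\le \tfrac{1+\sqrt 2}{4}\,n^{3/2}+o(n^{3/2})$.

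The hard part will be executing the LP while tracking how pairs of codegree $\ge 3$ contribute to both sides of the random-sampling inequality and to $\sum_d d\,m_d$ (through potentially large values of $d$). Here the K\H ov\'ari--S\'os--Tur\'an bound on $e(G_3)$ is essential: it caps the total number of high-codegree pairs, and combined with the random-sampling constraint at $q=1/\sqrt 2$ produces, after optimization, the coefficient $(1+\sqrt 2)/4$ in front of $n^{3/2}$.
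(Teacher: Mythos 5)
Your two $C_4$-freeness observations are correct and cleanly proved: (a) if $d(v_i,v_{i+1})\ge 3$ for every edge of a $4$-cycle in $G_3$, one can extend each edge by a vertex outside the cycle, and the four resulting hyperedges are distinct, giving a forbidden trace; (b) if every edge of a $4$-cycle on $V\setminus S$ lies in $L_{w}$ for some $w\in S$, the four hyperedges $\{v_i,v_{i+1},w_i\}$ again form a trace $C_4$. The random-sampling inequality you derive from (b), namely $\sum_d m_d(1-q^d)\le \tfrac12(1+o(1))q^{-1/2}n^{3/2}$, is also a valid consequence.

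The difficulty is that the linear program you then set up is unbounded, so it cannot yield $e(\cH)=O(n^{3/2})$, let alone the constant $(1+\sqrt2)/4$. The objective is $3e(\cH)=\sum_d d\,m_d$, whose coefficients grow linearly in $d$, whereas every constraint you use has coefficients that are uniformly bounded: $1-q^d\le 1$ in the sampling constraints and $\mathbb{1}[d\ge 3]\le 1$ in the K\H ov\'ari--S\'os--Tur\'an constraint. Any dual certificate would require $d\le \alpha(1-q^d)+\beta\,\mathbb{1}[d\ge 3]\le \alpha+\beta$ for all $d$, which fails once $d>\alpha+\beta$. Concretely, setting $m_D=\tfrac12 n^{3/2}$ for a single large $D$ and $m_d=0$ otherwise satisfies $\sum_{d\ge3}m_d\le \tfrac12 n^{3/2}$ and every sampling constraint (the left side is at most $\tfrac12 n^{3/2}\le \tfrac12 q^{-1/2}n^{3/2}$), yet $\sum_d d\,m_d = \tfrac{D}{2}n^{3/2}\to\infty$. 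Thus your bound $e(G_3)\le\tfrac12(1+o(1))n^{3/2}$ caps the \emph{number} of high-codegree pairs but not their \emph{total contribution} $\sum_{d\ge3}d\,m_d$, and such hypergraphs with a pair of codegree $\Theta(n)$ do exist (e.g.\ all hyperedges through a fixed pair is trace-$C_4$-free), so the constraint cannot be strengthened to cap $d$ itself. Some additional structural bound on $\sum_{d\ge3}(d-2)m_d$ is unavoidable. The paper supplies exactly that: it partitions $\cH$ into the hyperedges containing a light pair ($\cH_1$, yielding a $C_4$-free graph $G_1$ with $|E(\cH_1)|=|E(G_1)|$) and the rest ($\cH_2$), and invokes Luo--Spiro's Lemma~3.1 that every edge in the shadow of $\cH_2$ lies in at most two hyperedges of $\cH_2$; a further edge-moving argument between the two parts then tightens the incidence count to produce $(1+\sqrt2)/4$. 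Your approach, without an analogue of that codegree-$\le2$ lemma or some other control on heavy pairs, cannot close.
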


Finally, we show a connection of $\ex_r(\tr(n,F))$ and generalized supersaturation. Given a graph $F$ and a positive integer $m$, the supersaturation problem deals with the minimum number of copies of $F$ in $n$-vertex graphs with at least $m$ edges. In the generalized version, we are also given a graph $H$ and the $n$-vertex graphs contain at least $m$ copies of $H$. Such problems were studied in \cite{cnr,gnv,hapa}. Note that this is equivalent to studying the most number of copies of $H$ when we are given an upper bound on the number of copies of $F$.

\begin{prop}\label{propi}
    There is an $n$-vertex graph $G$ with $O(n^{|V(F)|-1})$ copies of $F$ such that $\ex_r(n,\tr(F))\le \cN(K_r,G)$.
\end{prop}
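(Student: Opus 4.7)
The plan is to take $\cH$ to be a $\tr(F)$-free $r$-uniform hypergraph on $n$ vertices achieving $|E(\cH)| = \ex_r(n,\tr(F))$ edges, and let $G$ be its $2$-shadow: the graph on $V(\cH)$ with $\{u,v\}\in E(G)$ iff some $h\in E(\cH)$ contains both $u$ and $v$. Each hyperedge $h$ spans a copy of $K_r$ in $G$ on vertex set $h$, and distinct $r$-sets $h$ give distinct $K_r$'s, so $\cN(K_r,G)\ge |E(\cH)| = \ex_r(n,\tr(F))$. The entire task is then to bound $\cN(F,G) = O(n^{|V(F)|-1})$.

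The key structural observation is the following dichotomy for any copy $F'\subseteq G$ of $F$ on vertex set $V(F')$: either (a) for every edge $uv\in E(F')$ there is some $h_{uv}\in E(\cH)$ with $h_{uv}\cap V(F')=\{u,v\}$, in which case the $h_{uv}$ are automatically pairwise distinct (since any such $h$ intersects $V(F')$ in a unique pair), and together with $F'$ they form a copy in $\tr(F)$, contradicting the hypothesis; or (b) some edge $uv$ of $F'$ is \emph{bad}, meaning every hyperedge $h\in E(\cH)$ with $\{u,v\}\subseteq h$ contains at least one vertex of $V(F')\setminus\{u,v\}$. Hence every $F'$ in $G$ admits at least one bad edge.

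To count, I will bound pairs $(F',uv)$ where $uv$ is a bad edge of $F'$. Fix $uv\in E(G)$ and an edge $v_iv_j\in E(F)$, and fix any single hyperedge $h^*\in E(\cH)$ with $\{u,v\}\subseteq h^*$ (which exists because $uv$ lies in the shadow). For any copy $F'$ in which $uv$ is bad and plays the role of $v_iv_j$, applying the bad-edge condition to $h^*$ forces the $(k-2)$-set $V(F')\setminus\{u,v\}$ (where $k=|V(F)|$) to meet the fixed $(r-2)$-set $h^*\setminus\{u,v\}$. This gives at most $(r-2)\binom{n-3}{k-3}$ choices for $V(F')\setminus\{u,v\}$, and for each choice at most $(k-2)!$ injective extensions to an embedding of $F$. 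Summing over the at most $\binom{n}{2}$ edges $uv\in E(G)$ and the $|E(F)|$ edges of $F$, and dividing by $|\Aut(F)|$, yields $\cN(F,G) = O(n^2\cdot n^{k-3}) = O(n^{k-1})$ as desired.

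The only delicate point is the distinctness argument in case (a) above, guaranteeing that non-blocking hyperedges for different edges of $F'$ really do assemble into a valid member of $\tr(F)$; once this is in hand the counting is routine, since each bad edge localizes its copy up to a choice of a $(k-2)$-set forced to meet a set of bounded size, which is what provides the saving of one factor of $n$ compared to the trivial $O(n^k)$ bound.
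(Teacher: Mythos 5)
Your proposal is correct and is essentially the paper's own argument: you take $G$ to be the shadow graph of an extremal $\tr(F)$-free hypergraph $\cH$, identify in every copy of $F$ a ``bad'' (the paper says ``special'') edge $uv$ all of whose containing hyperedges meet the rest of the copy, fix one such hyperedge $h^*$, and observe the remaining $|V(F)|-2$ vertices must hit $h^*\setminus\{u,v\}$, yielding $O(n^{|V(F)|-3})$ completions per edge and $O(n^{|V(F)|-1})$ copies of $F$ overall. The only cosmetic difference is that the paper separates this counting into a standalone proposition about shadow graphs of arbitrary $\tr(F)$-free hypergraphs and then specializes, while you fold both steps into one argument.
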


Obviously this connection to generalized Tur\'an problems is less useful than the previously mentioned ones, partly because there are less results on generalized supersaturation problems. In the non-degenerate case when $\chi(F)>r$, a special case of a result of Halfpap and Palmer \cite{hapa} states that $\cN(F,G)=o(n^{|V(F)|})$ implies $\cN(K_r,G)\le (1+o(1))\ex(n,K_r,F)$. Therefore, with Proposition \ref{propi} we obtain the (already known) asymptotics of $\ex(n,\tr(F))$ in this case. We could not find any application of Proposition \ref{propi} that gives new results, although it plays a small role in the proof of Theorem \ref{book}.


\section{Proofs}

The \textit{shadow graph} of a hypergraph $\cH$ has vertex set $V(\cH)$ and $uv$ is an edge if and only if there is a hyperedge in $\cH$ containing both $u$ and $v$. If $\cH$ contains a $\tr(F)$ (or any Berge-$F$), then the core of that is a copy of $F$ in the shadow graph. The converse is not true, for example if $\cH$ is $r$-uniform, then the shadow graph contains copies of $K_r$, even if $\cH$ is Berge-$K_r$-free.

Note that in the shadow graph of a $\tr(F)$-free hypergraph, each copy of $F$ has to contain an edge $uv$ with the property that each hyperedge containing $uv$ contains another vertex from that copy of $F$. In multiple proofs below, we will pick such an edge for each copy of $F$ and call it the \textit{special edge} of that copy of $F$.

We can show the following generalization of Proposition \ref{propi}.

\begin{prop}\label{propi2}
    If $\cH$ is $\tr(F)$-free, then the shadow graph $G$ of $\cH$ contains $O(n^{|V(F)|-1})$ copies of $F$.
\end{prop}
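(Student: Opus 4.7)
\textbf{Proof proposal for Proposition \ref{propi2}.} The plan is to double-count pairs $(F_0, uv)$, where $F_0$ is a copy of $F$ in the shadow graph $G$ and $uv$ is its \emph{special edge} in the sense of the paragraph preceding the statement. First I would verify that every copy $F_0$ of $F$ in $G$, on some vertex set $V_0$, admits a special edge. Suppose for contradiction that for every edge $e$ of $F_0$ there is a hyperedge $h_e \in E(\mathcal{H})$ with $h_e \cap V_0 = e$. The assignment $e \mapsto h_e$ is automatically injective, since $h_e \cap V_0$ recovers $e$. Hence $\{h_e : e \in E(F_0)\}$ is a Berge copy of $F$ in $\mathcal{H}$ whose trace on $V_0$ is exactly $F_0$, i.e., a member of $\tr(F)$, contradicting $\tr(F)$-freeness.

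Next, for each edge $uv \in E(G)$, I would bound the number of copies $F_0$ of $F$ for which $uv$ is the special edge. Writing $k = |V(F)|$ and
$$T_{uv} := \{ h \setminus \{u,v\} : h \in E(\mathcal{H}),\ \{u,v\} \subseteq h\},$$
if $uv$ is special for $F_0$ then $S := V_0 \setminus \{u,v\}$ has size $k-2$ and meets every member of $T_{uv}$, i.e., $S$ is a transversal of $T_{uv}$. Since $uv$ is an edge of the shadow graph, $T_{uv}$ is nonempty; fixing any $t^* \in T_{uv}$, which has size $r-2$, every such $S$ must contain at least one vertex of $t^*$. Hence the number of admissible $(k-2)$-sets $S$ is at most $(r-2)\binom{n-1}{k-3} = O(n^{k-3})$, and as only $O(1)$ copies of $F$ can be realized on $\{u,v\} \cup S$ with $uv$ as a distinguished edge, the number of copies of $F$ with special edge $uv$ is $O(n^{k-3})$.

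Summing over the at most $\binom{n}{2}$ edges of $G$ yields $O(n^{k-1})$ pairs $(F_0, uv)$, and since each copy $F_0$ contributes at least one pair the total number of copies of $F$ in $G$ is $O(n^{k-1})$, as required. The only subtle point is the existence of the special edge, which rests on the observation that distinct edges $e$ of $F_0$ would force distinct hyperedges $h_e$ in the hypothetical Berge lift, so the lift really does produce a member of $\tr(F)$; after that, the argument is a routine transversal count, with the saving of a factor of $n$ over the naive $\binom{n}{k-2}$ bound coming from anchoring $S$ to a single set $t^* \in T_{uv}$.
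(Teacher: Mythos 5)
Your proof is correct and follows essentially the same approach as the paper's: fix an edge $uv$, anchor to one hyperedge containing it, observe that any copy of $F$ with $uv$ special must use one of the $r-2$ remaining vertices of that hyperedge (yielding $O(n^{|V(F)|-3})$ copies per edge), and sum over the $O(n^2)$ edges. The one place you go beyond the paper is in explicitly proving that a special edge exists via the injectivity of $e \mapsto h_e$; the paper asserts this in the preamble as a ``Note that\dots'' without spelling out the injectivity argument, so your version is a slightly more careful write-up of the same idea rather than a different proof.
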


Note that hyperedges of $\cH$ create distinct copies of $K_r$ in $G$, thus $|E(\cH)|\le \cN(K_r,G)$, hence the above proposition implies Proposition \ref{propi}.

\begin{proof}
    Let us fix an edge $uv$ and count the number of copies of $F$ that have $uv$ as special edge. Let $h$ be a hyperedge containing $u$ and $v$. Then each copy of $F$ containing $uv$ contains also at least one of the other $r-2$ vertices of $h$. Therefore, we can count the copies of $F$ containing $uv$ by picking a non-empty subset of the other vertices of $h$ ($O(1)$ ways), then picking the rest of the vertices of $F$ ($O(n^{|V(F)|-3})$ ways) and then picking a copy of $F$ on those $|V(F)|$ vertices ($O(1)$ ways). As there are $O(n^2)$ special edges, the proof is complete.
\end{proof}

Let us continue with the proof of Theorem \ref{4cyc}. We will use the following lemma due to Luo and Spiro (Lemma 3.1 in \cite{lusp}). We say that a subset of a hyperedge is \textit{light} if exactly one hyperedge contains it, and \textit{heavy} otherwise.

\begin{lemma}[Luo, Spiro \cite{lusp}]\label{lemls} Let $\cH$ be a $\tr(C_4)$-free 3-uniform hypergraph. Let $\cH_2$ denote the subhypergraph of $\cH$ consisting of the hyperedges that do not contain light edges. Then every edge of the shadow graph is in at most two hyperedges of $\cH_2$.
\end{lemma}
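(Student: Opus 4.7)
\noindent\emph{Proof plan.} Assume for contradiction that some shadow edge $uv$ lies in three hyperedges $e_1, e_2, e_3 \in \cH_2$, written $e_i = \{u, v, v_i\}$ with $v_1, v_2, v_3$ distinct. The plan is to produce a $\tr(C_4)$ from this data. For each pair $\{i, j\} \subset \{1, 2, 3\}$ I first attempt to lift the $4$-cycle $u - v_i - v - v_j - u$ in the shadow graph to a $\tr(C_4)$ on the vertex set $\{u, v_i, v, v_j\}$. Since $e_i, e_j \in \cH_2$, each of $uv_i, vv_i, uv_j, vv_j$ is heavy; if for each of the four cycle edges I can select a containing hyperedge whose third vertex lies outside $\{u, v, v_i, v_j\}$, we are done. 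Otherwise at least one edge ``fails,'' and heaviness then forces either $\{u, v_i, v_j\}$ or $\{v, v_i, v_j\}$ to be a hyperedge of $\cH$: failure of $uv_i$ or $uv_j$ forces the former (since $e_i$, resp.\ $e_j$, already accounts for third vertex $v$, so the forced second hyperedge must have third vertex $v_j$, resp.\ $v_i$), and failure of $vv_i$ or $vv_j$ forces the latter.

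Running this for all three pairs $\{1,2\}, \{1,3\}, \{2,3\}$ yields three forced ``auxiliary'' hyperedges, each labelled by a side ($u$ or $v$). By pigeonhole, at least two of them share a side; without loss of generality both are on the $u$-side. Since any two distinct $2$-subsets of $\{1,2,3\}$ overlap in exactly one index, these two hyperedges take the form $\{u, v_m, v_{k_1}\}$ and $\{u, v_m, v_{k_2}\}$ with $m, k_1, k_2$ the three distinct indices.

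I will then exhibit a $\tr(C_4)$ directly, on the $4$-cycle $v_{k_1} - v_m - v_{k_2} - v - v_{k_1}$ in the shadow graph, using the four hyperedges $\{u, v_m, v_{k_1}\}, \{u, v_m, v_{k_2}\}, e_{k_2}, e_{k_1}$. Each covers its corresponding cycle edge with third vertex equal to $u$, which lies outside $\{v_{k_1}, v_m, v_{k_2}, v\}$, and the four hyperedges are clearly distinct as triples; this contradicts the $\tr(C_4)$-freeness of $\cH$. The one step requiring care is the ``failure implies forced hyperedge'' deduction described above; once it is granted, the pigeonhole and the explicit $\tr(C_4)$ construction are routine verification.
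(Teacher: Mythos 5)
The paper does not prove this lemma; it is cited verbatim as Lemma~3.1 of Luo and Spiro~\cite{lusp}, so there is no ``paper's own proof'' to compare against. I will therefore only assess correctness.

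Your argument is correct. The key ``failure implies forced hyperedge'' step is sound: if $uv$ lies in $e_1,e_2,e_3\in\cH_2$ with $e_i=\{u,v,v_i\}$, and the $4$-cycle $u\,v_i\,v\,v_j$ fails to lift because (say) $uv_i$ is bad, then heaviness of $uv_i$ produces a second hyperedge through $uv_i$ whose third vertex lies in $\{v,v_j\}$; it cannot be $v$ (that hyperedge is $e_i$), so it is $v_j$, forcing $\{u,v_i,v_j\}\in\cH$. Failure of $v_iv$ or $vv_j$ symmetrically forces $\{v,v_i,v_j\}$, so each pair yields a $u$-side or $v$-side auxiliary triple. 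The pigeonhole then gives (WLOG) $\{u,v_m,v_{k_1}\},\{u,v_m,v_{k_2}\}\in\cH$, and together with $e_{k_1},e_{k_2}$ these realize a $\tr(C_4)$ on $\{v_{k_1},v_m,v_{k_2},v\}$ with all four third vertices equal to $u\notin\{v_{k_1},v_m,v_{k_2},v\}$; the four hyperedges have distinct two-element traces on this set, hence are automatically distinct. The only minor gap in the write-up is that you state rather than verify that some cycle edge of $u\,v_i\,v\,v_j$ must fail: one should add that if none fails, each of the four cycle edges has a containing hyperedge with third vertex outside $\{u,v,v_i,v_j\}$, and these (distinct, since they have distinct traces) form a $\tr(C_4)$ directly. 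With that sentence spelled out, the proof is complete.
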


We restate Theorem \ref{4cyc} here for convenience.

\begin{theorem*}
    $\ex_3(n,\tr(C_4))\le (1+\sqrt{2}) n^{3/2}/4+o(n^{3/2})$.
\end{theorem*}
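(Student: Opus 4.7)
The plan is to follow the Luo--Spiro decomposition $\cH = \cH_1 \sqcup \cH_2$, where $\cH_1$ is the set of hyperedges containing a light pair and $\cH_2$ the rest, and to strengthen their argument by a joint inequality. First I would bound $|\cH_1|$ by designating, for each $h \in \cH_1$, a light pair of $h$ as its \emph{chosen pair}; since a light pair lies in a unique hyperedge, the chosen-pair graph $G_1$ satisfies $|E(G_1)| = |\cH_1|$. I claim $G_1$ is $C_4$-free: if $abcd$ were a $C_4$ in $G_1$ coming from hyperedges $h_1, h_2, h_3, h_4$, then the third vertex $w_1$ of $h_1$ cannot lie in $\{c,d\}$---for instance, $w_1 = c$ would mean $h_1$ contains the light pair $\{b,c\}$ of $h_2$, forcing $h_1 = h_2$, a contradiction---and similarly for the other $w_i$, so the $h_i$ realise a $\tr(C_4)$ on $\{a,b,c,d\}$. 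Hence $|\cH_1| = |E(G_1)| \le (1/2 + o(1))n^{3/2}$ by K\H{o}v\'ari--S\'os--Tur\'an.

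For $|\cH_2|$, Lemma~\ref{lemls} yields $3|\cH_2| \le 2|E(G_2)|$, where $G_2$ is the shadow of $\cH_2$. The key to improving Luo--Spiro's $5n^{3/2}/6$ is to establish a joint inequality of the form
\[
|\cH_1| + 2|\cH_2| \le \tfrac{\sqrt{2}}{2}\, n^{3/2} + o(n^{3/2}).
\]
Combining this with $|\cH_1| \le (1/2 + o(1)) n^{3/2}$ and maximising $m = |\cH_1| + |\cH_2|$ subject to the two linear constraints gives $m \le (1+\sqrt{2})n^{3/2}/4 + o(n^{3/2})$, with the extremum at $|\cH_1| \approx n^{3/2}/2$ and $|\cH_2| \approx (\sqrt{2}-1)n^{3/2}/4$. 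The coefficient $\sqrt{2}/2$ matches the K\H{o}v\'ari--S\'os--Tur\'an bound for $K_{2,3}$-free graphs, which suggests that the target inequality should come from building an auxiliary (multi)graph $G^{\ast}$ with $|E(G^{\ast})| \ge |\cH_1| + 2|\cH_2|$ whose vertex pairs have at most two common neighbours.

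The main obstacle is proving the target inequality. A natural construction is to let $G^{\ast}$ consist of $G_1$ together with two chosen pairs from each hyperedge of $\cH_2$ (each such pair lies in at most two hyperedges of $\cH_2$ by Lemma~\ref{lemls}, so the resulting incidence bookkeeping is clean); ruling out three common neighbours of a pair in $G^{\ast}$ would amount to showing that any such $K_{2,3}$-like sub-configuration, combined with the pair-degree constraints of Lemma~\ref{lemls} and the $C_4$-freeness of $G_1$, produces a $\tr(C_4)$. The delicate cases are the ``bad'' 4-vertex configurations where all four $3$-subsets lie in $\cH_2$ (which yield $K_4$ in $G_2$ without violating $\tr(C_4)$-freeness) and ``mixed'' $K_{2,3}$'s whose edges alternate between $G_1$-origin and $G_2$-origin; handling these requires careful case analysis exploiting the uniqueness of the hyperedge containing a given light pair.
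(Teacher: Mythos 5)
Your overall framework is sound — the decomposition $\cH = \cH_1 \sqcup \cH_2$, the fact that $G_1$ is $C_4$-free (and your argument for that is correct), and the linear-programming reformulation showing that the two constraints $|\cH_1| \le \tfrac12 n^{3/2}$ and $|\cH_1| + 2|\cH_2| \le \tfrac{\sqrt 2}{2} n^{3/2}$ imply the theorem. The arithmetic also checks out. But the entire weight of the argument has been pushed onto the ``joint inequality'' $|\cH_1| + 2|\cH_2| \le \tfrac{\sqrt 2}{2} n^{3/2} + o(n^{3/2})$, and the construction you propose for proving it does not work. If $G^{\ast}$ is to be a simple subgraph of the shadow graph $G$, then since each edge of $G_2$ lies in at most two hyperedges of $\cH_2$ (Lemma~\ref{lemls}), picking two pairs from each of the $|\cH_2|$ hyperedges yields at most $\min\{2|\cH_2|, |E(G_2)|\}$ \emph{distinct} edges, and $2|\cH_2|$ can exceed $|E(G_2)|$ (indeed $|\cH_2|$ may approach $\tfrac{2}{3}|E(G_2)|$). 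So $|E(G^{\ast})| \ge |\cH_1| + 2|\cH_2|$ fails in general; if you instead allow $G^{\ast}$ to be a multigraph, the K\H{o}v\'ari--S\'os--Tur\'an bound no longer applies. Moreover, even setting aside multiplicity, you have not given an argument that $G^{\ast}$ is $K_{2,3}$-free, and this is not a small omission — it is a strictly stronger condition than the $C_4$-freeness of $G_1'$ that the paper must already work hard to establish via a delicate case analysis.

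The paper takes a structurally different route to the same numbers. First, it does \emph{not} obtain the factor $\sqrt 2/2$ by exhibiting a $K_{2,3}$-free subgraph; instead it proves that $\sum_{u,v} d_2(u,v) \le (2+o(1))\binom{n}{2}$ directly in $G$ by (i) showing $d_2(u,v) \le d_3(u,v) + 3$ for every pair (Claim~\ref{refclaim1}), (ii) bounding the number of triangles in $G$ by $O(n^{3/2})$, and (iii) showing only $O(n^{3/2})$ non-adjacent pairs have three common neighbours, so ``almost every'' pair has $d_2 \le 2$; this gives $|E(G)| \le (1+o(1))\tfrac{\sqrt 2}{2} n^{3/2}$ by Jensen. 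Second, rather than a static $G^{\ast}$, it iteratively \emph{moves} certain $K_4$-configurations from $\cH_2$ into an enlarged family $\cH_1'$ (with associated graph $G_1'$), proves $G_1'$ is still $C_4$-free, and tracks the number $f$ of moved edges. Third, and most importantly, it shows the sharper bound $|\cH_2| \le |E(G_2)|/2 + f/2$ by proving that at most roughly half of $G_2'$'s edges can lie in two hyperedges of $\cH_2$, using a careful pairing argument (the set $Y$ and the ``belongs to'' relation). Your naive bound $|\cH_2| \le \tfrac{2}{3}|E(G_2)|$, coming directly from Lemma~\ref{lemls}, is simply not strong enough to make the LP close: with it one only gets $|\cH_1| + 2|\cH_2| \le |E(G_1)| + \tfrac43|E(G_2)| \le |E(G)| + \tfrac13|E(G_2)|$, which exceeds $\tfrac{\sqrt 2}{2}n^{3/2}$ in general. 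So the gap in your proposal is precisely the ingredient the paper spends the most effort on.
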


We denote hyperedges consisting of vertices $u,v,w$ by $uvw$. Note that there are 4-uniform hyperedges in the proof below, but we do not use the analogous notation, because we use that for 4-cycles. We let $uvwx$ denote the 4-cycle with edges $uv,vw,wx,xu$.

\begin{proof}
Let $\cH$ be a 3-uniform $Tr(C_4)$-free hypergraph, and let $G$ denote its shadow graph. For vertices $u,v$ of $\cH$, we denote by $d_3(u,v)$ the number of hyperedges containing both $u$ and $v$, and by $d_2(u,v)$ the number of common neighbors of $u$ and $v$ in $G$.

\begin{claim}\label{refclaim1}  $d_2(u,v)\le d_3(u,v)+3$.
\end{claim}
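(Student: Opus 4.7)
I would argue by contradiction. Let $N'$ be the set of common neighbors $w$ of $u$ and $v$ in the shadow graph for which $\{u,v,w\} \notin E(\cH)$; since distinct hyperedges through $\{u,v\}$ produce distinct third vertices (all in $N \setminus N'$), $|N'| = d_2(u,v) - d_3(u,v)$, and it suffices to show $|N'| \le 3$. Suppose instead $|N'| \ge 4$ and pick $w_1, \ldots, w_4 \in N'$. For each pair $\{i,j\}$ the shadow $4$-cycle $u w_i v w_j$ must fail to produce a $\tr(C_4)$ in $\cH$, so some edge of it has every containing hyperedge with third vertex inside $\{u,v,w_i,w_j\}$. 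For the edge $u w_i$, the third vertex cannot be $v$ (since $w_i \in N'$), so the only possibility is that $\{u, w_i\}$ is a \emph{light} edge whose unique hyperedge is $\{u, w_i, w_j\}$; the other three edges give three analogous ``light'' obstructions.

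Define partial functions $\alpha, \beta: \{1,\ldots,4\} \to \{1,\ldots,4\}$ by $\alpha(i) = j$ iff $\{u, w_i\}$ is light with unique hyperedge $\{u, w_i, w_j\}$, and $\beta$ analogously for $v$ (both well-defined, since uniqueness pins down $j$). A pair $\{i,j\}$ must then be \emph{$\alpha$- or $\beta$-covered}, meaning $\alpha$ or $\beta$ sends one of $i,j$ to the other. The crucial structural restriction is twofold: (i) if $\alpha(i) = j$, then no $k \ne j$ satisfies $\alpha(k) = i$, for otherwise $\{u, w_i, w_j\}$ and $\{u, w_i, w_k\}$ would be two distinct hyperedges through the light pair $\{u, w_i\}$; and (ii) if both $\alpha(i)$ and $\alpha(j)$ are defined with $\alpha(i) = j$, then $\alpha(j) = i$, since the hyperedge $\{u, w_i, w_j\}$ also passes through $\{u, w_j\}$ and therefore coincides with the unique hyperedge through $\{u, w_j\}$.

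A brief case analysis using (i) and (ii) on four vertices yields: $\alpha$ covers at most $3$ pairs, with equality only when $\alpha$ is a \emph{star}, i.e., there is a ``center'' $c_\alpha$ with $\alpha(i) = c_\alpha$ for every $i \ne c_\alpha$ (and $\alpha(c_\alpha)$ undefined), in which case the covered pairs are precisely the three pairs containing $c_\alpha$; the same holds for $\beta$. Since all $\binom{4}{2} = 6$ pairs must be covered and each of $\alpha, \beta$ covers at most $3$, both must be stars and their pair-sets must be disjoint. However, any two $4$-vertex stars share the pair $\{c_\alpha, c_\beta\}$ (or coincide entirely when $c_\alpha = c_\beta$), contradicting disjointness. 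I expect the main obstacle to be the structural lemma (the $3$-pair cap together with the star-only characterisation), which leans on both restrictions (i) and (ii); once these are in hand, the intersection argument for the two stars is immediate.
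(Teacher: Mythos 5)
Your proof is correct, but it takes a genuinely different route from the paper's. After establishing, for each pair $\{w_i,w_j\}$, that the obstruction on the shadow $4$-cycle $uw_ivw_j$ forces one of $uw_iw_j$, $vw_iw_j$ into $\cH$ (as you also observe en route to the light-edge conclusion), the paper simply notes that the $4$-cycle $w_1w_2w_3w_4$, together with those four hyperedges (each extending an edge $w_iw_j$ by $u$ or $v$), is itself the core of a $\tr(C_4)$: the extensions have distinct intersections with $\{w_1,\dots,w_4\}$ and $u,v\notin\{w_1,\dots,w_4\}$, so the $\tr(C_4)$-freeness of $\cH$ gives an immediate contradiction. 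You instead bypass this second application of $\tr(C_4)$-freeness and encode the forced light edges as partial functions $\alpha,\beta$ with the structural constraints (i) and (ii), then show each covers at most three of the six pairs with equality only for a star centered at a vertex with no outgoing arrow, and finally observe that two $4$-vertex stars cannot have disjoint pair-sets. I checked your constraints (i), (ii) and the cap-with-star characterization against the small case analysis (total functions force a perfect matching of $2$-cycles by (ii), covering only $2$ pairs; $|D|=3$ with no $2$-cycle forces a star; etc.), and it all holds. Both proofs are valid; the paper's is shorter and gains leverage by reapplying the hypothesis to a new cycle, whereas yours is more self-contained and elementary but defers to a case analysis that you only sketch and that is easy to get slightly wrong.
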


\begin{proof}[Proof of Claim]
Assume we have four vertices $w_1,w_2,w_3,w_4$ such that for every $i$, $uvw_i$ is not a hyperedge in $\cH$, yet $uw_i$ and $vw_i$ are in $G$. Consider the 4-cycle $uw_1vw_2$ in $G$. Since it is not the core of a $Tr(C_4)$ in $\cH$, there is a hyperedge inside these four vertices. Since no hyperedge contains $u$, $v$ and $w_i$, we have that either $uw_1w_2$ or $vw_1w_2$ is in $\cH$. This holds for each pair $w_iw_j$, which implies that the 4-cycle $w_1w_2w_3w_4$ is the core of a $Tr(C_4)$, where each edge is extended by $u$ or $v$ to a hyperedge.
\end{proof}

Let us return to the proof of the theorem. Consider now the triangles in $G$ such that the vertices do not form a hyperedge of $\cH$. For each edge of $G$, there are at most three such triangles, and each triangle has three edges, thus there are at most $|E(G)|$ such triangles. This implies that the total number of triangles in $G$ is at most $|E(G)|+|E(\cH)|=O(n^{3/2})$.

Consider now two non-adjacent vertices $u$ and $v$. By Claim \ref{refclaim1} we have $d_2(u,v)\le 3$. Observe that if $w_1$ and $w_2$ are in the common neighborhood of $u$ and $v$, then at least one of $uw_1w_2$ and $vw_1w_2$ is in $\cH$ (because of the 4-cycle $uw_1vw_2$). In particular, $w_1$ and $w_2$ are adjacent in $G$.
Assume that $d_2(u,v)=3$ and let $w_1,w_2,w_3$ be the common neighbors of $u$ and $v$. Then these three vertices form a triangle, which we denote by $T(u,v)$.

Assume that $uw_1w_2$, $uw_1w_3$ and $uw_2w_3$ are each in $\cH$. Since $uw_1vw_2$ is not a core of a $Tr(C_4)$, one of the edges incident to $v$, say $vw_1$ is contained only in hyperedges of $\cH$ inside this 4-cycle, which must be $vw_1w_2$ since $uv$ is not in the shadow graph of $\cH$. Consider now the 4-cycle $uw_1vw_3$. By similar reasoning, either $vw_1$ or $vw_3$ is only in hyperedges inside this 4-cycle, thus $vw_1w_3\in\cH$. This contradicts that $vw_1$ is contained only in $vw_1w_2$ in $\cH$. Therefore, $uw_iw_j$ is not in $\cH$ for some $1\le i<j\le 3$.

We obtained that $u$ (and analogously $v$) is the common neighbor of the endpoints of an edge $w_iw_j$ in $G$ such that the triangle $uw_iw_j$ does not form a hyperedge in $\cH$, and $w_iw_j$ is in a triangle in $G$. For each edge of $G$, there are at most 3 such vertices, thus for each triangle in $G$, there are at most 27 such pairs. Therefore, the number of pairs $(u,v)$ such that $uv$ is not an edge in $G$ and $u$ and $v$ have three common neighbors is at most $27\cN(K_3,G)=O(n^{3/2})$. Let $E'$ denote the set of non-adjacent pairs $u,v$ with $d_2(u,v)=3$. We have

$\sum_{uv\in E(G)}d_2(u,v)\le \sum_{uv\in E(G)}d_3(u,v)+3=3|\cH|+3|E(G)|=O(n^{3/2})$, $\sum_{(u,v)\in E'}d_2(u,v)=3|E'|=O(n^{3/2})$ and $\sum_{(u,v)\not\in E(G)\cup E'}d_2(u,v)\le \sum_{(u,v)\not\in E(G)\cup E'}2\le 2\binom{n}{2}$. Therefore,

\[\sum_{u,v\in V(G)}d_2(u,v)=\sum_{uv\in E(G)}d_2(u,v)+\sum_{(u,v)\in E'}d_2(u,v)+\sum_{(u,v)\not\in E(G)\cup E'}d_2(u,v)\le (2+o(1))\binom{n}{2}.\]

This implies that $\sum_{v\in V(G)}\binom{d(v)}{2}\le (2+o(1))\binom{n}{2}$, and then $|E(G)|\le (1+o(1))\frac{\sqrt{2}}{2}n^{3/2}$ follows by Jensen's inequality.

\smallskip 

 Let $\cH_1$ be the subhypergraph consisting of the hyperedges that contain a light edge. Let us pick a light edge from each hyperedge of $\cH_1$ and let $G_1$ denote the resulting graph. Then $G_1$ is $C_4$-free, thus $|E(\cH_1)|=|E(G_1)|\le \ex(n,C_4)=(1+o(1))n^{3/2}/2$.

    Let $\cH_2$ denote the rest of the hyperedges of $\cH$ and $G_2$ denote the shadow graph of $\cH_2$. Then $G_2$ may contain copies of $C_4$, but each copy $uvwx$ of $C_4$ contains an edge $uv$ such that each hyperedge in $\cH$ that contains $u$ and $v$ also contains $w$ or $x$. We say that $uv$ is a \textit{special edge} for this $C_4$. In particular, $u$ and $v$ are contained together in at most two hyperedges of $\cH$, and since $uv\in E(G_2)$, we have that $uvw, uvx\in\cH$ (with at least one of them in $\cH_2$). This also implies that there is a $K_4$ on $u,v,w,x$ in $G$.

    Let us consider a 4-cycle $uvwx$ in $G_2$ with special edge $uv$, and assume that both $uvw$ and $uvx$ are in $\cH_2$. Then the $K_4$ on $u,v,w,x$ is in $G_2$ and the 4-cycle $uwvx$ also has a special edge, without loss of generality $uw$. This shows that $uwx\in\cH$ (since $uw$ is in $G_2$). Then $uwx\in\cH_2$ since each subedge is in $G_2$. Observe that we have found two hyperedges in $\cH_2$ containing $ux$. Now we have two possibilities. Either there are no further hyperedges containing $ux$, or there are some hyperedges containing $ux$ (they must be in $\cH_1$ because of Lemma \ref{lemls}).

    Now we move the hyperedges $uvw$, $uvx$ and $uwx$ to $\cH_1$ and the edges $uv$ and $uw$ to $G_1$. In the first case, if $ux$ is not contained in any hyperedges of $\cH_1$, we also move $ux$ to $G_1$. We repeat this as long as we can. We denote the hypergraph we obtain this way from $\cH_1$ by $\cH_1'$ and the graph we obtain from $G_1$ by $G_1'$.

\begin{clm}
    $G_1'$ is $C_4$-free.
\end{clm}


\begin{proof}[Proof of Claim] First we show that each hyperedge of $\cH$ contains at most one edge of $G_1'$, except for the hyperedges added to $\cH_1'$. This holds for $G_1$ since the edges are light and we picked only one from each hyperedge.
The hyperedges of $\cH_1$ do not contain any of the edges $uv$ in $E(G_1')\setminus E(G_1)$, since the only hyperedges of $\cH$ that contain $uv$ and $uw$ (and $ux$ if that is also added) are $uvw$, $uwx$ and $uvx$, and they are in $\mathcal{H}_2$.




Observe that the special edge $ab$ of a $C_4$ $abcd$ in $G_1'$ would be contained in a hyperedge that contains two edges of $G_1'$, thus in a hyperedge that was added to $\cH_1'$. Since the only hyperedges containing $ab$ are $abc$ and $abd$, we have that $ab$ is moved from $G_2$ to $G_1'$ together with, say $ad$ (and potentially $ac$). This implies that $abc\in \cH_2$. Then $bc\in E(G_2)$, thus $bc$ was also moved to $G_1'$. But that movement cannot be at the same time when we moved $ab$ and $ad$ (and potentially $ac$) to $G_1'$, thus cannot be at the same time when we moved $abc$ to $\cH_1'$. But we move each edge together with the two hyperedges of $\cH_2$ that contain that edge, thus we have to move $ab$ at the same time when we move $abc$, a contradiction.
\end{proof}





Let us return to the proof of the theorem.
Let $f=|E(G_1')\setminus E(G_1)|$.
The number of hyperedges in $\cH_1'$ is at most the number of edges in $G_1$ plus 3/2 times the number of new edges in $G_1'$, i.e., $|E(\cH_1')|\le (1+o(1))n^{3/2}/2+f/2$.
We also have $|E(\cH_1)|=|E(G_1)|\le (1+o(1))n^{3/2}/2-f$.

Let $\cH_2'$ denote the hyperedges of $\cH$ that are not in $\cH_1'$, thus $\cH_2'$ is a subhypergraph of $\cH_2$. Let $G_2'$ denote the graph of we obtain from $G_2$ by deleting the edges we moved to $G_1'$.
We have that $|E(G_2')|= |E(G_2)|-f$. Recall that by Lemma \ref{lemls}, each edge of $G_2$ is in at most two hyperedges of $\cH_2$.

Let $Y$ denote the set of edges in $G_2'$ that are in exactly two hyperedges of $\cH_2$ and $g:=|Y|$. We next provide an upper bound on $g$ in terms of $f$ and edges of $G_2'$ that are not in $Y$.




Let $uv\in Y$, then the two corresponding hyperedges $uvw$ and $uvz$ create a $C_4$ with a chord $uv$ in $G_2$. That $C_4$ has a special edge, say $uw$, then $uwz$ is also in $\cH$ (if there are multiple special edges and one of them was moved to $G_1'$, we pick that as $uw$). If $uwz\not \in \cH_2$, then $uw\in G_2'$ and is contained in less than two hyperedges of $\cH_2$. We say that $uw$ \textit{belongs} to $uv$.

\begin{clm}
    Each edge $uw$ not in $Y$ belongs to at most one edge $uv$.
\end{clm}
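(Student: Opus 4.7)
The plan is to proceed by contradiction: suppose $uw \notin Y$ belongs to two distinct edges $uv_1$ and $uv_2$, and derive a contradiction by pinning down the set of $\cH_2$-hyperedges containing $uw$ in two incompatible ways.

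First I would unpack the definition of ``belongs'' separately for each $i \in \{1,2\}$. By definition, $uv_i \in Y$, so there is a partner vertex $z_i$ such that the two hyperedges of $\cH_2$ containing $uv_i$ are exactly $uv_iw$ and $uv_iz_i$; moreover $uw$ has been chosen as a special edge of the $C_4$ $uwv_iz_i$ in $G$, and the extra hypothesis $uwz_i \notin \cH_2$ holds.

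The crux is then the following observation. The special-edge property says that every hyperedge of $\cH$ containing $uw$ also contains another vertex of the $C_4$, so any $\cH_2$-hyperedge on $uw$ lies in $\{uwv_i, uwz_i\}$. Since $uwv_i = uv_iw \in \cH_2$ and $uwz_i \notin \cH_2$, the unique hyperedge of $\cH_2$ containing $uw$ must be $uwv_i$. Applying this conclusion for $i=1$ and $i=2$ forces $uwv_1 = uwv_2$, hence $v_1 = v_2$, contradicting $uv_1 \neq uv_2$.

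The step that requires care is keeping the two hypergraph layers straight: the special-edge property is a statement about \emph{all} hyperedges of $\cH$, while the uniqueness we want is inside $\cH_2$, and the condition $uwz_i \notin \cH_2$ packed into the definition of ``belongs'' is precisely the bridge between them. Beyond unpacking these definitions correctly, no new machinery is needed.
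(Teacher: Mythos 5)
Your proof has a genuine gap: you only treat the case where the two edges that $uw$ might belong to both share the vertex $u$, i.e.\ have the form $uv_1$ and $uv_2$. That case is indeed immediate from the uniqueness of the $\cH_2$-hyperedge on $uw$, exactly as you observe. But the ``belongs'' relation pairs $uw$ with the \emph{chord} of a $C_4$ in which $uw$ is a side, and such a chord shares exactly one endpoint with $uw$ --- and it can just as well be $w$ as $u$. The paper first reduces to the observation that the chord must lie inside $uwv$, the unique $\cH_2$-hyperedge on $uw$, so the only two candidates are $uv$ and $wv$. Your argument resolves ``$uv$ vs.\ $uv'$'' (and symmetrically ``$wv$ vs.\ $wv'$''), but it cannot rule out $uw$ belonging to both $uv$ and $wv$: here the $\cH_2$-hyperedges $uwv$ produced by the two definitions coincide, so no contradiction falls out of uniqueness alone.

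That remaining case is exactly where the real work is. The paper's route is: from $uwz \notin \cH_2$ and the heaviness of $uw$ and $uz$ it follows that $wz$ is light; on the other hand, if $uw$ also belonged to $wv$, then $uw$ would be special for a second $C_4$ $uwz'v$ in $G_2$ whose side $wz'$ is necessarily heavy, forcing $z' \ne z$; but $uw$ being special for both $C_4$'s, and being heavy, pins the set of hyperedges on $uw$ to $\{uwv, uwz\}$ and simultaneously to $\{uwv, uwz'\}$, which is impossible once $z \ne z'$. Without some version of this argument (or an explicit justification that the mixed case cannot occur), the claim is not established.
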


\begin{proof}[Proof of Claim] We have that $uw$ is a special edge of a 4-cycle $uwvz$, and $uwz\not\in\cH_2$ implies that $wz$ must be a light edge. Observe that $uw$ can belong only to an edge in the unique hyperedge in $\cH_2$ that contains $u,w$, thus to $uv$ or $wv$. If $uw$ belongs to $wv$ as well, then $uw$ is the special edge of a 4-cycle of the form $uwz'v$ in $G_2$. We have $z'\neq z$ since $wz$ is light. But then $uwz'$ must be in $\cH$ because of $uwzv'$, but cannot be in $\cH$ because $uw$ is only in hyperedges inside $uvwz$.
\end{proof}

Assume now that $uwz\in\cH_2$ and we have not moved $uw$ to $G_1'$. The only reasons can be that we have already moved at least one of the edges of the $K_4$ on $u,v,w,z$ to $G_1'$, or we have already moved at least one of the three hyperedges we would have moved with it to $\cH_1'$, because of another subedge, another $K_4$. Let $x$ be the fourth vertex of this other $K_4$. Note that if we have moved one of those edges, we have also moved all hyperedges of $\cH_2$ containing that edge, in particular we have moved at least one of the hyperedges $uwv$, $uwz$ and $uvz$.

No matter which of the hyperedges $uwv$, $uwz$ and $uvz$ we have moved earlier, it contains two subedges containing $u$. Each such subsedge is contained by two of the hyperedges $uwv$, $uwz$ and $uvz$. Therefore, none of the hyperedges in $\cH_2$ containing such a subedge contains $x$. This implies that at least two 3-sets containing $x$ inside the $K_4$ are not in $\cH_2$. But we only move things to $\cH_1'$ if at least three hyperedges inside the $K_4$ are in $\cH_2$, a contradiction. This implies that we have not moved any of these three hyperedges $uwv$, $uwz$ and $uvz$ earlier. But these hyperedges in $\cH_2$ and whenever we move an edge to $G_1'$, we also move the hyperedges that contain that edge from $\cH_2$ to $\cH_1'$, thus each subedges, each edges inside $u,v,w,z$ have not been moved, therefore, we could move $uw$ to $G_1'$, a contradiction.

Finally, assume that $uwz\in\cH_2$ and we have moved $uw$ to $G_1'$. It was because of the $K_4$ on $u,v,w,z$ since the hyperedges containing $u,w$ are $uwv$ and $uwz$.
Then we also moved at least one adjacent edge to $G_1'$ at the same time. If we moved three edges to $G_1'$, then there are three edges inside the $K_4$ that remain in $G_2'$.
If we moved two edges to $G_1'$, then there are four edges inside the $K_4$ that remain in $G_2'$. This can happen at most $f/2$ times, thus there are at most $2f$ edges in $Y$ where this occurs.

We have obtained for each edge of $G_2'$ in $Y$ a unique edge not in $Y$, with at most $2f$ exceptions. This implies that $g=|Y|\le |E(G_2')|/2+f=|E(G_2)|/2+f/2$.
Then in $G_2$, we have at most $g+f\le |E(G_2)|/2+3f/2$ edges that are contained in exactly two hyperedges of $\cH_2$.
This shows that the total number of edge-hyperedge incidences between $G_2$ and $\cH_2$ is at most $3|E(G_2)|/2+3f/2$, hence $|E(\cH_2)|\le |E(G_2)|/2+f/2$.
Recall that $|E(G_1)|+f/2=|E(G_1')|-f/2\le (1+o(1))n^{3/2}/2-f/2$ and
$|E(G_2)|\le |E(G)|-|E(G_1)|\le (1+o(1))\frac{\sqrt{2}}{2}n^{3/2}-|E(G_1)|$.

Combining the upper bounds we obtained on $|E(\cH_1)|$ and $|E(\cH_2)|$, we obtain that $|E(\cH)|=|E(\cH_1)|+|E(\cH_2)|\le |E(G_1)|+ |E(G_2)|/2+f/2\le \frac{|E(G_1)|}{2}+\frac{|E(G)|}{2}+f/2\le \frac{|E(G_1)|}{2}+(1+o(1))\frac{\sqrt{2}}{4}n^{3/2}+f/2=(1+o(1))n^{3/2}/4+(1+o(1))\frac{\sqrt{2}}{4}n^{3/2}$.
\end{proof}


We finish the paper by proving Theorem \ref{book}. Recall that it states $\ex_3(n,\tr(B_t))=\lfloor (n-1)^2/4\rfloor$ for sufficiently large $n$.

\begin{proof}[Proof of Theorem \ref{book}]
     The lower bound is given by taking a complete bipartite graph on $n-1$ vertices, and adding the same new vertex to each edge.

    Let $\cH$ be a $\tr(B_t)$-free hypergraph and let $\cH_1$ be the subhypergraph consisting of the hyperedges having a light subedge. Let $G_1$ be a graph obtained by taking a light subedge for each hyperedge in $\cH_1$. Let $\cH_2$ denote the rest of the hyperedges of $\cH$ and $G_2$ denote the shadow graph of $\cH_2$. Let $G$ denote the shadow graph of $\cH$.

    \begin{claim}\label{clami0}
        $|E(\cH_2)|=o(n^2)$.
    \end{claim}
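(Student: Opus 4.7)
I plan to argue by contrapositive: suppose $|E(\cH_2)| \ge \epsilon n^2$ for some $\epsilon > 0$ and construct a $\tr(B_t)$ inside $\cH$, contradicting our standing hypothesis. The key structural feature of $\cH_2$ I would exploit is that every pair of vertices appearing in a hyperedge of $\cH_2$ is heavy in $\cH$, and hence comes with a ``spare'' hyperedge that can serve as a trace-piece of a prospective $B_t$.

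The first sub-step is to bound $d_{\cH_2}(u,v)$ uniformly by some constant $K=K(t)=O(t^2)$. Given a pair $\{u,v\}$ with $d_{\cH_2}(u,v)\ge K$, list the corresponding hyperedges $uvw_1,\ldots,uvw_K\in\cH_2$; for each $i$, the heaviness of $uw_i$ and $vw_i$ provides extension hyperedges $uw_ia_i,vw_ib_i\in\cH$ with $a_i\neq v$ and $b_i\neq u$. Choosing a random $t$-subset $I\subset[K]$ together with one extra index $j\notin I$, the candidate hyperedges $uvw_j$, $uw_{i_l}a_{i_l}$, $vw_{i_l}b_{i_l}$ ($i_l\in I$) realize a $\tr(B_t)$ on $V(B_t)=\{u,v\}\cup\{w_i:i\in I\}$ with spine $uv$, provided each $a_{i_l},b_{i_l}$ avoids the other leaves in $V(B_t)$. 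A direct union bound over the ``bad'' events (that some $a_i$ or $b_i$ coincides with some $w_{i'}$, $i'\in I\setminus\{i\}$) shows such an $I$ exists once $K\ge Ct^2$, and distinctness of the $2t+1$ hyperedges is automatic since they cover distinct pairs of $V(B_t)$. Therefore $d_{\cH_2}(u,v)<K$ for every pair, which already yields $|E(\cH_2)|=O(n^2)$.

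To upgrade $O(n^2)$ to $o(n^2)$, I would turn to the shadow graph $G_2$ of $\cH_2$: because each of its edges has $\cH_2$-codegree at most $K-1$, it suffices to show $|E(G_2)|=o(n^2)$. Running a construction analogous to the first sub-step on a $K_{s,s}$ inside $G_2$ (in place of a ``book'' of hyperedges sharing a common pair), I would show $G_2$ is $K_{s,s}$-free for some $s=s(t)$, whence K\H{o}v\'ari--S\'os--Tur\'an gives $|E(G_2)|=O(n^{2-1/s})=o(n^2)$ and thus $|E(\cH_2)|=O(|E(G_2)|)=o(n^2)$. The main obstacle is precisely this second step: a $K_{s,s}$ in $G_2$ is bipartite, so it does not itself provide a $B_t$ with its spine edge together with many common neighbors; the prospective spine and its $t$ common neighbors have to be extracted from the hyperedges witnessing the $K_{s,s}$ (together with their heavy-subedge extensions) in a way that produces valid, distinct traces. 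Handling this bipartite-to-book reduction cleanly, possibly combined with Proposition~\ref{propi2} applied to control the number of book copies in the global shadow graph $G$, is the delicate combinatorial point.
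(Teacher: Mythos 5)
Your first sub-step is sound and parallels the paper's: the paper bounds the $\cH_2$-codegree of each edge of $G_2$ by $3t-3$ using Luo--Spiro's lemma for forbidden $K_{2,t}$ traces (and the observation that any hyperedge through the pair with third vertex off the $K_{2,t}$ core upgrades it to a $\tr(B_t)$). Your direct $O(t^2)$ argument is slightly weaker but adequate for the intended purpose.

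The second sub-step, however, is a dead end, not merely ``delicate.'' The shadow graph $G_2$ of a $\tr(B_t)$-free hypergraph need \emph{not} be $K_{s,s}$-free for any constant $s$. To see this for $t\ge 2$, take parts $U=\{u_1,\dots,u_s\}$, $V=\{v_1,\dots,v_s\}$, and for each pair $(i,j)$ introduce four \emph{fresh, pairwise distinct} vertices $w_{ij},w'_{ij},z_{ij},z'_{ij}$; let the hyperedges be $u_iv_jw_{ij}$, $u_iv_jw'_{ij}$, $u_iw_{ij}z_{ij}$ and $v_jw_{ij}z'_{ij}$. Then all three subedges of $u_iv_jw_{ij}$ are heavy, so every such hyperedge lies in $\cH_2$ and $G_2$ contains the full $K_{s,s}$ on $U\cup V$. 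Yet no adjacent pair in the shadow graph has more than two common neighbors, so for $t\ge 3$ the graph $G$ is even $B_t$-free; and for $t=2$ one checks that for each of the three types of adjacent pairs with two common neighbors (namely $(u_i,v_j)$, $(u_i,w_{ij})$, $(v_j,w_{ij})$) both hyperedges through that pair have their third vertex among the two pages, so the spine edge has no valid trace hyperedge. Hence $\cH$ is $\tr(B_t)$-free while $G_2\supseteq K_{s,s}$, and K\H{o}v\'ari--S\'os--Tur\'an cannot be invoked as you propose.

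The paper's proof avoids this trap by never trying to bound $|E(G_2)|$ directly via a bipartite exclusion; instead it bounds the number of \emph{triangles} in $G_2$, exploiting that every hyperedge of $\cH_2$ contributes a distinct triangle. Concretely: apply Proposition~\ref{propi2} to get $O(n^{t+1})=o(n^{t+2})$ copies of $B_t$ in $G_2$, invoke the Erd\H{o}s--Frankl--R\"odl removal lemma to find $o(n^2)$ edges covering all copies, use the codegree bound $3t-3$ to dispatch the $\cH_2$-hyperedges meeting the removed set, and finish with Alon--Shikhelman's bound $\ex(n,K_3,B_t)=o(n^2)$ on triangles in the remaining $B_t$-free graph. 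If you want to salvage your outline, this triangle-counting-with-removal route is the missing ingredient, rather than a $K_{s,s}$-freeness statement for $G_2$.
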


    \begin{proof}[Proof of Claim]
        $G_2$ contains $O(n^{t+1})$ copies of $B_t$ by Proposition \ref{propi2}. Hence by the removal lemma \cite{efr} there is a set $A$ of $o(n^2)$ edges in $G_2$ such that each copy of $B_t$ contains an edge from $A$.

    We claim that each edge of $G_2$ is in at most $3t-3$ hyperedges of $\cH_2$. We use a lemma of Luo and Spiro \cite{lusp}, who showed the analogous statement in the case of forbidden $\tr(K_{2,t})$. More precisely, they showed that if an edge $xy$ is in at least $3t-2$ hyperedges of $\cH_2$, then there is a $\tr(K_{2,t})$ in $\cH$ such that $x$ and $y$ are the vertices in the smaller part of the core $K_{2,t}$. To find a $\tr(B_t)$ in $\cH$, we need to find a hyperedge containing $x,y$ such that the third vertex of that hyperedge is not in this core $K_{2,t}$. This is doable if $t>1$, since we can pick any of the $2t-2$ hyperedges containing $x,y$ and avoiding the other $t$ vertices of the core $K_{2,t}$.

    Now we are ready to count the number of triangles in $G_2$. There are at most $(3t-3)|A|=o(n^2)$ triangles containing an edge in $A$. The rest of $G_2$ is $B_t$-free, thus contains $o(n^2)$ triangles by a result of Alon and Shikhelman \cite{ALS2016}. As each hyperedge of $\cH_2$ creates a triangle in $G_2$, we have $|E(\cH_2)|=o(n^2)$, completing the proof.
    \end{proof}

\begin{claim}\label{clmi}
    For any edge $uv$ in $G$, $u$ and $v$ have at most $3t-2$ common neighbors in $G_1$.
\end{claim}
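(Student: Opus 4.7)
The plan is to prove the claim by contradiction: assume $u,v$ have at least $3t-1$ common neighbors $w_1,\dots,w_{3t-1}$ in $G_1$, and build an explicit $\tr(B_t)$ in $\cH$ with $uv$ as the shared edge of the book.

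First I would unpack the structure. For each $i$, $uw_i\in E(G_1)$ means $uw_i$ was selected as a light subedge of some hyperedge, which by the definition of ``light'' must be the unique hyperedge of $\cH$ containing $uw_i$; write it $h_i=\{u,w_i,a_i\}$. Similarly $vw_i$ lies in a unique hyperedge $h'_i=\{v,w_i,b_i\}$. Because $G_1$ takes at most one selected light subedge per hyperedge of $\cH_1$, the hyperedges $h_1,\dots,h_{3t-1}$ are pairwise distinct, so are the $h'_1,\dots,h'_{3t-1}$, and we even get $h_i\neq h'_j$ for all $i,j$ (otherwise one hyperedge would contribute two chosen subedges).

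From these distinctness facts I would extract four ``no clash'' consequences: (i) $a_i\neq v$, because $a_i=v$ would give $h_i=\{u,v,w_i\}\supseteq\{v,w_i\}$, forcing $h_i=h'_i$; (ii) symmetrically $b_i\neq u$; (iii) $a_i\neq w_j$ for $j\neq i$, because $a_i=w_j$ would give $h_i\supseteq\{u,w_j\}$, forcing $h_i=h_j$; and (iv) symmetrically $b_i\neq w_j$ for $j\neq i$. Now pick any $t$ of the common neighbors as leaves of $B_t$, say $w_1,\dots,w_t$, and use the hyperedges $h_i, h'_i$ to extend $uw_i, vw_i$; the four no-clash statements guarantee that the added third vertices $a_i, b_i$ all lie outside $V(B_t)=\{u,v,w_1,\dots,w_t\}$. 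For the edge $uv$ pick any hyperedge $\{u,v,c\}$ containing $uv$ (it exists because $uv\in E(G)$); if $c$ were some $w_\ell$, then $\{u,v,w_\ell\}$ would be the unique hyperedge carrying the light edge $uw_\ell$, whence $h_\ell=\{u,v,w_\ell\}$ and $a_\ell=v$, contradicting (i). Thus $c\not\in V(B_t)$, and the chosen hyperedge is distinct from the $h_i,h'_i$, producing the promised $\tr(B_t)$.

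The main obstacle is translating the ``one chosen light subedge per hyperedge'' rule into the four no-clash statements; once in hand they force all extensions to land outside $V(B_t)$ essentially for free. Incidentally, the argument above already gives the stronger bound of at most $t-1$ common neighbors, so the target $3t-2$ is ample slack.
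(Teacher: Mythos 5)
Your proof is correct and follows essentially the same strategy as the paper: assemble a $\tr(B_t)$ on $\{u,v,w_1,\dots\}$ by extending each book edge with the unique hyperedge carrying it. The one genuine refinement is your observation that the third vertex $a_i$ of $h_i$ can never be \emph{any} $w_j$ (since $a_i=w_j$ would force $h_i=h_j$, making one hyperedge contribute two chosen light subedges to $G_1$); the paper instead allows such collisions and dodges them by relabeling and spacing the indices, taking $w_1,w_4,\dots,w_{3t-2}$, which is why it settles for the weaker constant $3t-2$. Your sharper ``no-clash'' facts render the relabeling unnecessary and yield the tighter bound $t-1$, which is ample for the claim as stated (though plugging it in downstream would of course require adjusting the explicit constants in the later claims).
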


\begin{proof}[Proof of Claim]
    Let us assume that $w_1,\dots,w_{3t-1}$ are each adjacent to both $u$ and $v$ in $G_1$. There is a hyperedge in $\cH$ containing $u$, $v$ and a third vertex, which is not among $w_1,\dots,w_{3t-2}$ without loss of generality. The hyperedge of $\cH_1$ containing $uw_1$ has a third vertex $w_1'$. Then $w_1'$ is not $v$, since then we would not pick $vw_1$ as an edge of $G_1$. If $w_1'=w_i$, we can assume without loss of generality that $i=2$. Similarly, the third vertex of the hyperedge containing $vw_1$ is not $u$ and if it is $w_i$, we can assume that $i\le 3$.

    Then we consider the third vertices of the hyperedges containing $uw_3$ and $vw_3$, analogously we can assume that none of them are $w_i$ with $i>6$. We continue this way, the third vertices of the hyperedges containing $uw_{3j-2}$ are not  $w_i$ with $i>3j$. Note that those vertices are not of the form $w_{3k-2}$ with $k<j$ either, since we have already studied the single hyperedge containing $uw_{3k-2}$ and the single hyperedge containing $uw_{3k-2}$, and the third vertex is not $w_{3j-2}$. Then the $B_t$ with vertices $u$, $v$ and $w_1,w_4, \dots, w_{3t-2}$ is the core of a $Tr(B_t)$, a contradiction completing the proof.
\end{proof}

Let us return to the proof of the theorem. The above claim implies that $G_1$ is $B_{3t-1}$-free (it is easy to see that $G_1$ is actually $B_t$-free). Observe that $|E(\cH_1)|=|E(G_1)|$, hence we are done by Claim \ref{clmi} unless $|E(G_1)|\ge n^2/4-o(n^2)$.
Therefore,
by the Erd\H os-Simonovits stability theorem \cite{erd1,erd2,simi} we can obtain a complete bipartite graph with parts $A$ and $B$,
by adding and deleting $o(n^2)$ edges. We may assume $A$ and $B$ form a maximum cut in $G_1$, and thus each vertex has no more neighbors in its own part than in the other part.


Our aim now is to refine the estimates on both $|E(\cH_2)|$ and $|E(G_1)|$ sufficient to establish the theorem. With that in mind, we let $\zeta,\vep$ be small but fixed positive constants, for which $\zeta$ is chosen to be small relative to $\vep$.

We then partition the vertices of $G$ into the following five sets: let $A'\subset A$ denote the set of vertices in $A$ that are adjacent in $G_1$ to all but at most $\zeta n$ vertices of $B$; we define $B' \subseteq B$ analogously.  We next let $A'' \subseteq A\setminus A'$ consist of the vertices adjacent to at least $n/4 + \zeta n$ vertices of $B'$ in $G_1$, and $B'' \subseteq B\setminus B'$ consist of the vertices adjacent to at least $n/4 + \zeta n$ vertices of $A'$ in $G_1$.  Finally, let $X = V(G)\setminus (A'\cup A'' \cup B' \cup B'')$ consist of all remaining vertices.

Since $|E(G_1)| \ge n^2/4 - o(n^2)$ and since $o(n^2)$ edges are missing between $A$ and $B$ or lie within $A$ or $B$, we have that $|A|-|A'|=o(n)$, $|B|-|B'|=o(n)$, and both $|A'|$ and $|B'|$ lie within $o(n)$ of $n/2$. Consequently, every pair of vertices in $A' \cup A''$ and in $B' \cup B''$ has $\Omega(n)$ common neighbors in $G_1$, hence both are independent sets in $G$ by Claim~\ref{clmi}. Moreover, we may also assume that $|X \cup A'' \cup B''| \le \zeta n$.\\

Let $x = |X|$.

\begin{claim}\label{clmii}
   $|E(G_1)| \le n^2/4 + x(-n/4 + 3\zeta n)$.
\end{claim}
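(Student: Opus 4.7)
The plan is to exploit the partition $V(G) = A' \cup A'' \cup B' \cup B'' \cup X$ and reduce the count to a bipartite estimate plus a per-vertex bound on $X$. Write $Y := A' \cup A''$ and $Z := B' \cup B''$; the paragraph preceding the claim already notes that both $Y$ and $Z$ are independent in $G_1$ (combining the $\Omega(n)$ common-neighbor estimate with Claim~\ref{clmi}). Consequently every edge of $G_1$ either joins $Y$ to $Z$ or is incident to $X$, giving
\[|E(G_1)| \le |Y|\cdot|Z| + \sum_{v \in X} d_{G_1}(v).\]
Since $|Y| + |Z| = n - x$, AM-GM yields $|Y|\cdot|Z| \le (n-x)^2/4 = n^2/4 - nx/2 + x^2/4$, which already supplies the leading $n^2/4$ and a crucial $-nx/2$ saving.

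The heart of the proof -- and what I expect to be the main obstacle -- is establishing $d_{G_1}(v) \le n/4 + 2\zeta n$ for each $v \in X$ (for $n$ large). Fix $v \in X \cap A$; the case $v \in X \cap B$ is symmetric. Since $v \notin A''$, by definition $d_{G_1}(v, B') < n/4 + \zeta n$. I split on whether $v$ has a $G_1$-neighbor in $A'$. If it does not, then $d_{G_1}(v, A) \le |A \setminus A'| = o(n)$ and $d_{G_1}(v, B \setminus B') \le |B \setminus B'| = o(n)$, so $d_{G_1}(v) < n/4 + \zeta n + o(n) \le n/4 + 2\zeta n$. If $v$ has a $G_1$-neighbor $u \in A'$, then $uv \in E(G)$, so Claim~\ref{clmi} limits their common $G_1$-neighbors to at most $3t-2$; since $u$ is $G_1$-adjacent to all but at most $\zeta n$ vertices of $B$, this forces $d_{G_1}(v, B') \le \zeta n + o(n) + 3t - 2$, hence $d_{G_1}(v, B) \le 2\zeta n$ for $n$ large. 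The max-cut assumption $d_{G_1}(v, A) \le d_{G_1}(v, B)$ then gives $d_{G_1}(v) \le 4\zeta n$, well below the claimed bound.

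Summing the pointwise bound, $\sum_{v \in X} d_{G_1}(v) \le x(n/4 + 2\zeta n)$, and combining with the estimate on $|Y|\cdot|Z|$:
\[|E(G_1)| \le n^2/4 - nx/4 + x^2/4 + 2\zeta nx.\]
Since $x \le |X \cup A'' \cup B''| \le \zeta n$, the leftover $x^2/4 \le \zeta nx/4$ is absorbed into the $\zeta$-term, yielding $|E(G_1)| \le n^2/4 + x(-n/4 + 3\zeta n)$, as required. The argument just needs $n$ large enough so that every $o(n)$ quantity arising from $|A \setminus A'|$ and $|B \setminus B'|$ fits inside the available $\zeta$-slack, which is already assumed by the setup.
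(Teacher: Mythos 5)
Your proof is correct and follows essentially the same route as the paper's: bound $d_{G_1}(v) \le n/4 + 2\zeta n$ pointwise for $v \in X$, then combine with the bipartite bound on $G_1 - X$. Your case split (on whether $v$ has a $G_1$-neighbor in $A'$) is slightly different from the paper's (which splits on whether $v$ has $A'\cup B'$-neighbors in one or both parts) and is arguably a bit cleaner, since you explicitly invoke the max-cut assumption in the case that the paper glosses over with ``by definition of $X$''.
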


\begin{proof}[Proof of Claim]
Let $u \in X$: trivially $u$ has at most $\zeta n$ neighbors in $X \cup A''\cup B''$.  If $u$ has a neighbor $v \in A'$ then $u$ can be adjacent to the at most $\zeta n$ non-neighbors of $v$ in $B'$ along with at most $3t-2$ common neighbors by Claim~\ref{clmi}. Similarly, if $u$ has a neighbor in $B'$ then it has at most $\zeta n + 3t-2$ neighbors in $A'$.  Consequently, if $u$ has neighbors in both $A'$ and $B'$ then it has less than $3\zeta n$ such neighbors in total, whereas if it only has neighbors in one of $A',B'$, it has at most $n/4 + \zeta n$ by definition of $X$. Combining these estimates gives $u$'s degree is at most $n/4 + 2 \zeta n$.

Since $G_1-X$ is bipartite, it follows that
\begin{align*}
|E(G_1)| &\le \frac{(n-x)^2}{4} + x\cdot \left(\frac{n}{4}+2\zeta n\right)= \frac{n^2}{4}+x\left(-\frac{n}{4} + \frac{x}{4}+2\zeta n\right) < \frac{n^2}{4}+x\left(-\frac{n}{4} + 3\zeta n\right)
\end{align*}
\end{proof}

\begin{claim}\label{clmiii}
 Provided $\zeta$ is suitably small compared to $\vep$,  $|E(\cH_2)| \le x\cdot 2t\vep n$
\end{claim}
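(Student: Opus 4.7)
The plan is to show that for every $u \in X$, the number of hyperedges of $\cH_2$ containing $u$ is at most $2t\vep n$; summing over $u \in X$ then yields $|E(\cH_2)| \le x\cdot 2t\vep n$, because every hyperedge of $\cH_2$ contains at least one vertex of $X$. Indeed, its three vertices form a triangle in $G$, while $A'\cup A''$ and $B'\cup B''$ are each independent in $G$ (as noted just before Claim~\ref{clmii}), so at least one vertex of the hyperedge must lie in $X$.

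Fix $u \in X$ and set $X^* := (X\cup A''\cup B'')\setminus\{u\}$, so $|X^*|\le \zeta n$. I would partition the hyperedges of $\cH_2$ through $u$ into ``clean'' ones $uvw$ with $v\in A'$ and $w\in B'$, and ``dirty'' ones with at least one of $v,w$ in $X^*$. From the estimate in the proof of Claim~\ref{clami0} that every $G_2$-edge is in at most $3t-3$ hyperedges of $\cH_2$, the dirty count through $u$ is at most $(3t-3)\zeta n$, which is at most $t\vep n$ once $\zeta$ is sufficiently small compared to $\vep$ (say $\zeta \le \vep/(3t)$).

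For the clean hyperedges I would aim for the same bound $t\vep n$, by contradiction. Double-counting gives $\#\{\text{clean}\} = \sum_{v\in A',\, uv\in G_2} d_{\cH_2}(u,v) \le (3t-3)d_{G_2,A'}(u)$, and symmetrically in $B'$, so more than $t\vep n$ clean hyperedges would force both $d_{G_2,A'}(u)$ and $d_{G_2,B'}(u)$ to exceed $\vep n/(3t)$. Fix any $v\in A'$ with $uv\in G_2$; since $v$ is $G_1$-adjacent to all but $\zeta n$ vertices of $B'$, the common $G$-neighbourhood $C := N_G(u)\cap N_G(v)\cap B'$ has size at least $d_{G_2,B'}(u) - \zeta n \ge \vep n/(3t) - \zeta n \gg t^2$ once $\zeta$ is small in $\vep$. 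An iterative selection in the style of the proof of Claim~\ref{clmi} then produces $c_1,\dots,c_t \in C$ such that, for each of the $2t+1$ edges of the resulting $B_t$ with apex $\{u,v\}$ and pages $c_1,\dots,c_t$, a hyperedge realiser with third vertex outside the core $\{u,v,c_1,\dots,c_t\}$ can be chosen --- giving a $\tr(B_t)$ in $\cH$, contradicting $\tr(B_t)$-freeness.

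The main obstacle is this final selection: unlike in Claim~\ref{clmi}, where each relevant edge was a light $G_1$-edge with a single $\cH_1$-extension to dodge, here each $G_2$-edge in the putative core can have up to $3t-3$ $\cH_2$-extensions to dodge (plus the unique $\cH_1$-extension if the edge also lies in $G_1$). At each step the forbidden ``third vertex'' set is thus of size $O(t^2)$, but the candidate pool $C$ has size $\Omega(\vep n)$, so the selection succeeds --- this is where the condition ``$\zeta$ suitably small compared to $\vep$'' is actually consumed.
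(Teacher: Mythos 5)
The proposal aims for a per-vertex bound --- that \emph{every} $u\in X$ lies in at most $2t\vep n$ hyperedges of $\cH_2$ --- and then sums. This is not the paper's argument; the paper only derives a bound of the form $t\cdot\min\{|A(u)|,|B(u)|\}$ on $|\cH_2(u)|$, allows this to be as large as $tn/2$ for the ``bad'' vertices $u\in L$, and then proves separately that $|L|\le\vep x$ (which requires the lower bound $x\ge 1+\vep/(4(t+1)\zeta)$). In other words, the paper's bound is an \emph{average} over $X$, not pointwise, and the whole point of introducing $L$ and $S$ and the coloring is that the pointwise bound cannot in general be obtained directly.

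The specific step that breaks is the iterative selection of pages $c_1,\dots,c_t\in C$. You want to realise the edges $vc_j$ of the putative core by hyperedges whose third vertex lies outside $\{u,v,c_1,\dots,c_t\}$, and you estimate a forbidden set of size $O(t^2)$ ``to dodge.'' But there may be many $w\in C$ for which the \emph{only} hyperedge of $\cH$ containing $\{v,w\}$ is $uvw$, i.e.\ the edge $vw$ is light and its unique extension is exactly $u$. For such $w$, the edge $vw$ simply cannot be realised outside the core regardless of which other pages you pick --- this is not a finite set to dodge, it removes $w$ from the candidate pool entirely. Since $v\in A'$ is $G_1$-adjacent to almost all of $B'$, and $uv\in G_2$ can lie in arbitrarily many $\cH_1$-hyperedges (whose picked light edge may well be $vw$), there is no a priori bound on the number of such unusable $w$'s; Claim~\ref{clmi} does not apply because these $w$ are not common $G_1$-neighbours of $u$ and $v$ (one of $uw,vw$ is in $G_2$ and the two graphs are disjoint). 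The paper circumvents exactly this by restricting to $w_j\in B(u_i)$ whose edge $vw_j$ has colour including some index other than $i$ --- which by definition produces a realiser of $vw_j$ avoiding $u_i$ --- and by controlling how many such colours can be used, rather than by trying to show the candidate pool $C$ is everywhere large.

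So while the high-level skeleton (every hyperedge of $\cH_2$ meets $X$; split per-vertex into ``clean'' and ``dirty'') matches the paper, the core of your argument is a different, stronger claim that the greedy construction you sketch does not establish.
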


\begin{proof}
Every edge of $\cH_2$ has at least one vertex in $X$: if we choose some $u \in X$ and some $v \in A'' \cup B'' \cup X$ adjacent to $u$ in $G_2$, then the edge $uv$ is contained in at most $3t-2$ edges of $\cH_2$.  This gives us a bound of $x(3t-2)\zeta n$ on the number of edges of $\cH_2$ with at least two vertices in $A'' \cup B'' \cup X$.

To count the remaining hyperedges of $\cH_2$, for each $u \in X$ let $\cH_2(u)$ denote the family of edges of $\cH_2$ containing $u$, a vertex in $A'$, and a vertex in $B'$. We then let $A(u)$ denote the set of vertices in $A'$ and $B(u)$ denote the set of vertices in $B'$ that are in a hyperedge in $\cH_2(u)$.  Finally, let $u_1,\dots, u_x$ be the vertices of $X$, and color every edge $vw$ between $A' \cup B'$ with the subset of $\{1,\ldots,x\}$ for which $vwu_i \in \cH$ (either $\cH_1$ or $\cH_2$). 

Now assume that for $v\in A(u_i)$ there exists $w_1,w_2,\dots, w_{t+1}\in B(u_i)$ such that $vw_j$'s color is not $\{i\}$.
We claim that there is a $Tr(B_t)$ with $u_i$ and $v$ as vertices in the smaller part. For the edge $u_iv$ we use an arbitrary hyperedge containing $u_i$ and $v$, without loss of generality the third vertex is not among $w_1,\dots, w_{t}$. For the edge $vw_j$, we use a hyperedge containing $v$ and $w_j$ and a vertex in $X$ other than $u_i$, which exists since $vw_j$'s color includes an index other than $i$. Finally, since each edge $u_iw_j$ lies in an edge of $\cH_2$ (by definition of $B(u_i)$), $u_iw_j$ is heavy and is therefore contained in an edge of $\cH$ that does not include $v$, and the third vertex of that edge cannot be in $B'$ (thus cannot be $w_\ell$) as $B'$ is independent in $G$.

By construction, $|\cH_2(u_i)|$ is bounded above by the number of heavy edges in the induced subgraph $G[A(u_i)\cup B(u_i)]$, and the argument in the preceding paragraph and symmetry show the heavy edges form a subgraph with maximum degree at most $t$.  We now partition $X$ into sets $L$ and $S$ as follows: let $S$ contain all vertices $u_i$ for which $\max\{|A(u_i)|,|B(u_i)|\} < \vep n$, and let $L = X\setminus S$.

If $u_i \in S$ then $G[A(u_i)\cup B(u_i)]$ contains at most $t\cdot \min\{|A(u_i)|,|B(u_i)|\} \le t\cdot \vep n$ heavy edges, yielding that $|\bigcup_{u \in S} \cH_2(u)| \le |S|\cdot t \vep n$.  Similarly, if $u \in L$ then $G[A(u_i)\cup B(u_i)]$ contains at most $tn/2$ heavy edges, so $|\bigcup_{u \in L} \cH_2(u)| \le |L|\cdot tn/2$.  The last step in our argument is to show that $|L| \le \vep x$: this is trivial if $L = \varnothing$ so we assume otherwise.

Let $u_i \in L$: each vertex $v \in A(u_i)$ is adjacent to at least $|B(u_i)|-t-\zeta n \ge \vep n - t - \zeta n > \vep n/2$ vertices of $B(u_i)$ with edges of color $\{i\}$, as $\zeta < \vep/2$.  Thus, the subgraph $G[A(u_i)\cup B(u_i)]$ contains at least $|A(u_i)|\cdot \vep n/2 \ge \vep^2 n^2/2$ edges with color $\{i\}$, which all lie inside $G_1$. 
Since $|E(G_1)| \le n^2/4$, it follows that $|L| \le (2\vep^2)^{-1}$.

Fix $u_i \in L$ and $u_j \in X$, $j \ne i$: let $vw$ be a heavy edge in $G[A(u_i)\cup B(u_i)]$ whose label includes $j$, where we assume $v \in A'$ and $w \in B'$. If $vwu_j \in \cH_2$ then $vw$ is an edge in $G[A(u_j)\cup B(u_j)]$. All such edges must lie in the intersection $G_{i,j}=G[(A(u_i)\cap A(u_j) \cup (B(u_i)\cap B(u_j))]$.  The earlier arguments show the maximum degree in $G_{i,j}$ is at most $2t$ (else some vertex lies on $t+1$ edges with a label other than $i$ or other than $j$). The definition of $A'$ and $B'$ implies that $G_{i,j}$ has at most $2t + \zeta n$ vertices in each part, and therefore there are fewer than $(2t + \zeta n)(2t) < 4t\zeta n$ such heavy edges.

If $vwu_j \notin \cH_2$, then it lies in $\cH_1$ and one of $vu_j,wu_j$ lies in $G_1$. But Claim~\ref{clmi} implies $u_j$ has less than $\zeta n+3t-2< 2\zeta n$ neighbors in $A'$ and in $B'$, since it has neighbors in both parts in $G$. Since the neighbor in $G_1$ determines the heavy edge, we have at most $4\zeta n$ such heavy edges.

Thus, $j$ occurs in the color of at most $4(t+1)\zeta n$ heavy edges in $G[A(u_i)\cup B(u_j)]$, and therefore there must be at least $|A(u_j)|/(4(t+1)\zeta n) \ge \vep/(4(t+1)\zeta)$ indices $j \ne i$. In other words, $x-1\ge\vep/(4(t+1)\zeta)$. Thus, provided $\zeta \le \vep^4/(2(t+1))$, we have
\[|L| \le \frac{1}{2\vep^2} \le \vep\cdot \frac{\vep}{4(t+1)\zeta} \le \vep x.\]

Combining the estimates, it follows that
\[|E(\cH_2)| \le x(3t-2)\zeta n + |S|t\vep n + |L|tn/2 \le x\vep n + x t \vep n + \vep x tn/2 \le x\cdot 2t\vep n.\]
\end{proof}


Provided $\vep$ and $\zeta$ are small enough, Claims~\ref{clmii} and \ref{clmiii} imply
\[|E(\cH)| \le n^2/4 + x(-n/4 + 3\zeta n +  2t\vep n) < (n-1)^2/4-\Omega(n)\]
if $x>2$. We suppose now that $x=2$ and that $|E(\cH)| \ge  (n-1)^2/4-\vep n$ and aim to produce a contradiction.  Since $(n-1)^2/4 = (n-2)^2/4 + (2n-3)/4$ and $\vep$ is small, by Claim~\ref{clmiii} we may assume that $|E(G_1)| >  (n-2)^2/4 + 3n/8$. Since $G-X$ is bipartite, at least $3n/8$ edges of $G_1$ are incident with $X=\{u_1,u_2\}$, so without loss of generality we suppose that $d_{G_1}(u_2)\ge 3n/16$ and that $u_2 \in A$.  Since $|X\cup A'' \cup B''| \le \zeta n$, it follows that $u_2$ has $\Omega(n)$ neighbors in $B'$ in $G_1$.

But then Claim~\ref{clmi} implies $u_2$ has no neighbors in $A'$ in $G$.  Consequently, every edge of $G_1$ in $A' \cup B'$ must form a triple in $\cH$ with $u_1$, implying $u_1$ is adjacent in $G$ to $A' \cup B'$.  This, in turn, implies $u_1$ has fewer than, say, $2\zeta n$ neighbors in $A'$ or in $B'$ in $G_1$ (by Claim~\ref{clmi} again). Thus, $d_{G_1}(u_1) < 3\zeta n$, so $u_2$ has at least
\[d_{G_1}(u_2) -\zeta n\ge \frac{3n}{8}-4\zeta n > \frac{n}{4} + \zeta n \]
neighbors in $B'$. But then $u_2 \in A''$ by definition rather than $X$, the desired contradiction.

Therefore, if $|E(\cH)| \ge  (n-1)^2/4-\vep n$ then we must have $x=1$, implying $\cH=\cH_1 \le \lfloor (n-1)^2/4\rfloor$, and that equality only holds when $\cH$ is formed by adding a new vertex to every edge of a maximum bipartite graph on $n-1$ vertices.
\end{proof}

\bigskip

\textbf{Funding}: Research supported by the National Research, Development and Innovation Office - NKFIH under the grants FK 132060 and KKP-133819.


\begin{thebibliography}{99}

		\bibitem{ALS2016} N. Alon, C. Shikhelman. Many $T$ copies in $H$-free graphs. \textit{Journal of Combinatorial Theory, Series B}, \textbf{121}, 146--172, 2016.


  \bibitem{Be87} C. Berge, Hypergraphes: combinatoire des ensembles finis, \textit{Gauthier-Villars}, 1987.



\bibitem{cnr} J. Cutler, J.D. Nir, A.J. Radcliffe,  (2022). Supersaturation for subgraph counts. \textit{Graphs and Combinatorics}, \textbf{38}(3), 65.

\bibitem{erd1} P. Erd\H os. Some recent results on extremal problems in graph theory, \textit{Theory of Graphs} (Internl. Symp. Rome), 118--123, 1966.

\bibitem{erd2} P. Erd\H os. On some new inequalities concerning extremal properties of graphs, in Theory of Graphs  (ed P.
Erd\H os, G. Katona), Academic Press, New York, 77--81, 1968.


\bibitem{efr} P. Erd\H os, P. Frankl, V. R\"odl,
The asymptotic number of graphs not containing a fixed subgraph and a problem for hypergraphs having no exponent
\textit{Graphs Combin.}, \textbf{2} (1986), 113--121.

\bibitem{ES1966} P. Erd\H os, M. Simonovits. A limit theorem in graph theory. \textit{Studia Sci.
Math. Hungar.} \textbf{1}, (1966), 51--57.

\bibitem{ersi} P. Erd\H os, M. Simonovits. Supersaturated graphs and hypergraphs, \textit{Combinatorica} \textbf{3} (1983), 181--192.

\bibitem{ES1946} P. Erd\H os, A. H. Stone. On the structure of linear graphs. \textit{Bulletin of the
American Mathematical Society} \textbf{52}, (1946), 1087--1091.

\bibitem{frapac} P. Frankl, J. Pach. On disjointly representable sets, \textit{Combinatorica} \textbf{4} (1984), 39-45.

\bibitem{fl} Z. F\"uredi, R. Luo. Induced Turán problems and traces of hypergraphs. \textit{European Journal of Combinatorics}, 103692, 2023.

\bibitem{fusi} Z.Füredi, M. Simonovits,  (2013). The history of degenerate (bipartite) extremal graph problems. In Erdős centennial (pp. 169-264). Berlin, Heidelberg: Springer Berlin Heidelberg.

\bibitem{gerbner} D. Gerbner, On non-degenerate Turán problems for expansions, \textit{European Journal of Combinatorics}, 124, 2025, 104071

\bibitem{gnv} D. Gerbner, Z.L. Nagy, M. Vizer, (2022). Unified approach to the generalized Turán problem and supersaturation. \textit{Discrete Mathematics}, 345(3), 112743.

\bibitem{gp1} D. Gerbner, C. Palmer, Extremal Results for Berge hypergraphs. \textit{SIAM Journal on Discrete Mathematics}, \textbf{31}, 2314--2327, 2017.

\bibitem{gp2} D. Gerbner, C. Palmer. Counting copies of a fixed subgraph in $ F $-free graphs. {\it European Journal of Combinatorics} {\bf 82}, 103001, 2019.

       \bibitem{gp} D. Gerbner, B. Patk\'os, Extremal Finite Set Theory,
1st Edition, CRC Press, 2018.

\bibitem{hapa} A. Halfpap, C. Palmer, (2021). On supersaturation and stability for generalized Turán problems. Journal of Graph Theory, 97(2), 232-240.

\bibitem{lusp} R. Luo, S. Spiro, (2021). Forbidding $ K_{2,t} $ Traces in Triple Systems. The Electronic Journal of Combinatorics, P2.4.

\bibitem{man} W. Mantel, Problem28, Wiskundige Opgaven 10 (1907), 60–61.

\bibitem{mubver} D. Mubayi, J. Verstra\"ete. A survey of Tur\'an problems for expansions. \textit{Recent Trends in Combinatorics}, 117--143, 2016.

\bibitem{muzh} D. Mubayi, Y. Zhao, (2007). Forbidding complete hypergraphs as traces. Graphs and Combinatorics, 23(6), 667-679.

\bibitem{pikhu} O. Pikhurko. Exact computation of the hypergraph Tur\'an function for expanded complete
2-graphs, \textit{Journal of Combinatorial Theory, Series B}, \textbf{103}(2) 220--225, 2013.

\bibitem{qg} B. Qian, G. Ge, (2022). A note on the Tur\'an number for the traces of hypergraphs. arXiv preprint arXiv:2206.05884.

\bibitem{sau} N.Sauer, On the density of families of sets, J. Combinatorial Theory Ser.A 13(1972), 145–147.

\bibitem{she} S. Shelah, A combinatorial problem; stability and order for models and theories in infinitary languages, Pacifc J.Math. 41 (1972), 247–261.

\bibitem{S}
M. Simonovits, A method for solving extremal problems in graph theory, stability problems. In Theory of Graphs (Proc. Colloq., Tihany, 1966), pp. 279--319. 1968.

\bibitem{simi} M. Simonovits. Extremal graph problems with symmetrical extremal graphs. Additional chromatic conditions, \textit{Discrete Math.} \textbf{7}, 349--376, 1974.



\bibitem{T}
P. Tur\'an. Egy gr\'afelm\'eleti sz\'els\H o\'ert\'ekfeladatr\'ol. \textit{Mat. Fiz. Lapok}, \textbf{48}, 436--452, 1941.

\bibitem{vc} V.N. Vapnik, A.Ya. Chervonenkis, On the uniform convergence of relative frequencies of events to their probabilities, Theory Probab. Appl. 16 (1971), 264–280.


\bibitem{zykov} A. A. Zykov. On some properties of linear complexes.\textit{ Matematicheskii Sbornik},\textbf{66}(2), 163--188, 1949.

\end{thebibliography}
\end{document}